\numberwithin{equation}{section}
\theoremstyle{plain}
\newtheorem{theorem}{Theorem}[section]
\newtheorem{lemma}[theorem]{Lemma}
\newtheorem{proposition}[theorem]{Proposition}
\newtheorem{corollary}[theorem]{Corollary}
\newtheorem{definition}[theorem]{Definition}
\theoremstyle{definition}
\newtheorem{example}[theorem]{Example}
\newtheorem{remark}{Remark}
\journal{arXiv}
\begin{document}
	
\begin{frontmatter}
		
\title{Denjoy-Carleman solvability of Vekua-type periodic operators }

\author[ufpr]{Alexandre Kirilov}
\affiliation[ufpr]{organization={	Departamento de Matemática, Universidade Federal do Paraná},
			city={Curitiba},
			postcode={81531-990}, 
			state={Paraná},
			country={Brazil}}
\ead{akirilov@ufpr.br}

\author[ufpr]{Wagner Augusto Almeida de Moraes}
\ead{wagnermoraes@ufpr.br}
	
	\author[ppgm]{Pedro Meyer Tokoro}
\affiliation[ppgm]{organization={Programa de Pós-Graduação em Matemática, 
		Universidade Federal do Paraná},
	city={Curitiba},
	postcode={81531-990}, 
	state={Paraná},
	country={Brazil}}
\ead{pedro.tokoro@ufpr.br}
	
		\begin{abstract}
		This paper explores the solvability and global hypoellipticity of Vekua-type differential operators on the n-dimensional torus, within the framework of Denjoy-Carleman ultradifferentiability. We provide the necessary and sufficient conditions for achieving these global properties in the case of constant-coefficient operators, along with applications to classical operators. Additionally, we investigate a class of variable coefficients and establish conditions for its solvability.
	\end{abstract}

\begin{keyword}
Vekua-type operators \sep Solvability \sep Global hypoellipticity \sep Denjoy-Carleman classes.
\MSC[2020] Primary 35A01 \sep 35B10 \sep Secondary 30G20 \sep 46F05.
\end{keyword}

\end{frontmatter}

\section{Introduction}

	The theory of generalized analytic functions, as introduced by Vekua in \cite{vekua}, initially dealt with solutions to equations of the form:
	$$ \partial_{\bar{z}}u + Au + B\bar{u} = F, $$
	where $\partial_{\bar{z}}$ stands for the Cauchy-Riemann operator, and the coefficients $A$ and $B$ belong to a suitable function space in the complex plane. This theory is closely linked to the theory of holomorphic functions and relies on specific regularity conditions on the coefficients to apply the similarity principle. Vekua used this framework to address problems in the theory of shells and to explore problems of infinitesimal bendings of surfaces.
	
	In \cite{Kravchenko2009}, V. Kravchenko extended this theory by replacing the Cauchy-Riemann operator with more general complex-valued vector fields. This generalization enabled the study of classical equations from mathematical physics, including the Schrödinger, Dirac, and Maxwell equations, among others.
	
	The work of Vekua, Kravchenko, Bers, and other researchers has found extensive applications in various fields, such as boundary value problems in elasticity theory, hydrodynamics, electric potential, mechanics, and more.
	
	In the article \cite{KMT24indag}, we investigated the solvability and global hypoellipticity of the periodic Vekua-type operator $P$ defined by
	\begin{equation}\label{P}
		Pu = Lu - Au - B\bar{u},
	\end{equation}
	where $A$ and $B$ are complex constants, and $L:\mathcal{C}^\infty(\mathbb{T}^n) \to \mathcal{C}^\infty(\mathbb{T}^n)$ is a general constant coefficient partial differential operator. In this case, we obtained a complete characterization of these global properties for this class of operators.
	
	Now, in this article, we address the Denjoy-Carleman solvability problem for the same class of constant coefficient operators. Additionally, we extend the analysis to a class of operators with variable coefficients, building on the results obtained in \cite{BDM} and \cite{AD}.
	
	In this paper, we organize the content as follows. In Section 2, we revisit fundamental notations and properties related to Denjoy-Carleman classes of ultradifferentiable functions. We provide the necessary background and refer the reader to \cite{artigo_bruno} for more comprehensive details and proofs. Next, we delve into the analysis of operators with constant coefficients, outlining the conditions under which they are solvable in the ultradifferentiable sense. This involves establishing a necessary and sufficient condition, linked to a Diophantine condition, which relates the growth of a value associated with the symbol of the operator to its solvability. We also demonstrate that global hypoellipticity and solvability are equivalent properties.
	
	In the latter part of the paper, we shift our focus to operators with variable coefficients. Motivated by the work in \cite{BDM} and \cite{AD}, we consider a class of complex vector fields with variable coefficients that satisfy the Nirenberg-Treves condition $(P)$. This condition ensures the local solvability of this class of vector fields. We derive conditions under which these operators are solvable.

	Further details on smooth and ultradifferentiable solvability and the regularity of solutions on tori can be found in \cite{AKM19, BDG17jfaa,BDG18, LA20}. In the wider context of compact Lie groups, the reader can see some results in \cite{wagner,KMR21komatsu, KMR22komatsu} and the references cited therein.

	\section{Notations and Fourier analysis in Denjoy-Carleman classes}

	Let us begin by recalling the definition and some useful properties of the spaces of ultradifferentiable functions of Roumieu type defined on the $n$-dimensional torus $\mathbb{T}^n \simeq \mathbb{R}^n / 2\pi \mathbb{Z}^n$. For more details on the results and their proofs, we refer the reader to \cite{artigo_bruno}.
	
	A sequence $\mathscr{M} = \{m_j\}_{j \in \mathbb{N}}$ of positive real numbers is called a weight sequence if it satisfies the following conditions:
	\begin{enumerate}\label{Msequence}
		\item[$i.$] $m_0 = m_1 = 1$;
		\item[$ii.$] $m_j^2 \leq m_{j-1} \,m_{j+1}$ for all $j \in \mathbb{N}$;
		\item[$iii.$] $\displaystyle\sup_{j,k \in \mathbb{N}_0} \left(\dfrac{m_{j+k}}{m_j \,m_k}\right)^{\frac{1}{j+k}} \leq H$, for some $H \geq 1$.
	\end{enumerate}
	
	Given a weight sequence $\mathscr{M} = \{m_j\}_{j \in \mathbb{N}}$ and $h > 0$, consider the set $\mathcal{E}_{\mathscr{M},h}(\mathbb{T}^n)$ of all smooth functions $f: \mathbb{T}^n \to \mathbb{C}$ such that
	$$
	\|f\|_{\mathscr{M},h} \doteq \sup_{\alpha \in \mathbb{N}_0^n} \sup_{x \in \mathbb{T}^n} \, \dfrac{|\partial^\alpha f(x)|}{h^{|\alpha|} \,m_{|\alpha|} \,|\alpha|!} < \infty.
	$$
	
	We observe that $\mathcal{E}_{\mathscr{M},h}(\mathbb{T}^n)$ is a Banach space with respect to the norm $\|\cdot\|_{\mathscr{M},h}$, and we define the space of periodic $\mathscr{M}$-ultradifferentiable functions of Roumieu type by
	$$
	\mathcal{E}_{\mathscr{M}}(\mathbb{T}^n) = \underset{h \rightarrow +\infty}{\operatorname{ind} \lim} \, \mathcal{E}_{\mathscr{M}, h}(\mathbb{T}^n).
	$$
	
	Moreover, its dual space, denoted by $\mathscr{D}'_\mathscr{M}(\mathbb{T}^n)$, is commonly referred to as the space of periodic $\mathscr{M}$-ultradistributions.

	Therefore, a smooth function $f: \mathbb{T}^n \to \mathbb{C}$ belongs to the space $\mathcal{E}_\mathscr{M}(\mathbb{T}^n)$ if and only if there exist constants $C, h > 0$ such that, for all $\alpha \in \mathbb{N}_0^n$,
	$$
	\sup_{x \in \mathbb{T}^n} |\partial^\alpha f(x)| \leq C h^{|\alpha|} m_{|\alpha|} |\alpha|!.
	$$
	
	The best-known example of a space of ultradifferentiable functions is given by the weight sequence $\mathscr{M} = \{m_j\}_{j \in \mathbb{N}_0}$, where $m_j = (j!)^{s-1}$. For each fixed $s \geq 1$, $\mathcal{E}_\mathscr{M}(\mathbb{T}^n) = \mathcal{G}^s(\mathbb{T}^n)$ is the space of periodic Gevrey functions of order $s$.
	
	The Fourier coefficients of a function $f \in \mathcal{E}_\mathscr{M}(\mathbb{T}^n)$ are given by
	$$
	\widehat{f}(\xi) \doteq (2\pi)^{-n} \int_{\mathbb{T}^n} f(x) e^{-i \xi \cdot x} \, dx, \quad \xi \in \mathbb{Z}^n,
	$$
	and, as usual, the Fourier coefficients of an ultradistribution $u \in \mathscr{D}'_\mathscr{M}(\mathbb{T}^n)$ are defined by
	$$
	\widehat{u}(\xi) = (2\pi)^{-n} \langle u, e^{-i \xi \cdot x} \rangle, \quad \xi \in \mathbb{Z}^n.
	$$
	
	The classes of ultradifferentiable functions and ultradistributions on tori can be characterized by the rate of decay or growth of their Fourier coefficients in the following way:
	\begin{align}
		f \in \mathcal{E}_\mathscr{M}(\mathbb{T}^n) \Longleftrightarrow & \ \text{there exist } C, \delta > 0 \text{ such that,} \label{EM_fourier_total} \\
		& \qquad \qquad |\widehat{f}(\xi)| \leq C \inf_{j \in \mathbb{N}_0} \dfrac{m_j \,j!}{\delta^j \,(1 + \|\xi\|)^j}, \quad \xi \in \mathbb{Z}^n; \nonumber \\
		u \in \mathscr{D}'_\mathscr{M}(\mathbb{T}^n) \Longleftrightarrow & \ \text{for all } \varepsilon > 0, \text{ there is } C_\varepsilon > 0 \text{ such that,} \label{DM_fourier_total} \\
		& \qquad\, \qquad |\widehat{u}(\xi)| \leq C_\varepsilon \sup_{j \in \mathbb{N}_0}  \dfrac{\varepsilon^j (1 + \|\xi\|)^j}{m_j \,j!}, \quad \xi \in \mathbb{Z}^n. \nonumber
	\end{align}
	
	Moreover, when $f \in \mathcal{E}_\mathscr{M}(\mathbb{T}^n)$ and $u \in \mathscr{D}'_\mathscr{M}(\mathbb{T}^n)$, we have
	$$
	f(x) = \sum_{\xi \in \mathbb{Z}^n} \widehat{f}(\xi) e^{i \xi \cdot x} \quad \text{and} \quad u = \sum_{\xi \in \mathbb{Z}^n} \widehat{u}(\xi) e^{i \xi \cdot x},
	$$
	with convergences in $\mathcal{E}_\mathscr{M}(\mathbb{T}^n)$ and $\mathscr{D}'_\mathscr{M}(\mathbb{T}^n)$ respectively.

	Another important characterization for the development of this work is given by the partial Fourier series, which can be described as follows: let us write $n = p + q$, with $p, q \in \mathbb{N}$, consider the direct sum $\mathbb{R}^n = \mathbb{R}^p \oplus \mathbb{R}^q$ and write $(t, x) \in \mathbb{T}^n$, meaning that $t \in \mathbb{T}^p$ and $x \in \mathbb{T}^q$. 
	
	Given $f \in \mathcal{E}_\mathscr{M}(\mathbb{T}^n)$, for each $t \in \mathbb{T}^p$, consider the function $f_t: \mathbb{T}^q \to \mathbb{C}$ given by $f_t(x) = f(t, x)$. Clearly, for each $t \in \mathbb{T}^p$, we have $f_t \in \mathcal{E}_\mathscr{M}(\mathbb{T}^q)$ and by the characterization \eqref{EM_fourier_total}, we can write
	$$
	f(t, x) = f_t(x) = \sum_{\xi \in \mathbb{Z}^q} \widehat{f}(t, \xi) e^{i \xi \cdot x}, 
	$$
	with convergence in $\mathcal{E}_\mathscr{M}(\mathbb{T}^q)$, where
	$$
	\widehat{f}(t, \xi) \doteq (2\pi)^{-q} \int_{\mathbb{T}^q} f_t(x) e^{-i \xi \cdot x} \, dx.
	$$
	Moreover, for each $\xi \in \mathbb{Z}^q$ fixed, we have that $\widehat{f}(\cdot, \xi) \in \mathcal{E}_\mathscr{M}(\mathbb{T}^p)$.
	
	Analogously, given an ultradistribution $u \in \mathscr{D}'_\mathscr{M}(\mathbb{T}^n)$, for each $\xi \in \mathbb{Z}^q$, we define the partial Fourier coefficient of $u$ as the functional $\widehat{u}(t, \xi): \mathcal{E}_\mathscr{M}(\mathbb{T}^p) \to \mathbb{C}$ given by
	$$
	\langle \widehat{u}(t, \xi), f \rangle = (2\pi)^{-q} \langle u, f(t) \otimes e^{-i \xi \cdot x} \rangle, \quad f \in \mathcal{E}_\mathscr{M}(\mathbb{T}^p).
	$$
	
	In this framework we have the following characterizations:
	\begin{align}
		f \in \mathcal{E}_\mathscr{M}(\mathbb{T}^n) \Longleftrightarrow\ & \exists C_p,h_p,h_q > 0;\ \forall \alpha\in\mathbb{N}_0^p,\ \forall\xi\in\mathbb{Z}^q \label{EM_fourier_partial} \\
		& \ \sup_{t\in\mathbb{T}^p}|\partial^\alpha \widehat f(t,\xi)| \leq C_p\,h_p^{|\alpha|}\,m_{|\alpha|}\,|\alpha|!\,\inf_{j\in\mathbb{N}_0}\dfrac{m_j\,j!}{h_q^j\,(1+\|\xi\|)^j}; \nonumber \\
		u \in \mathscr{D}'_\mathscr{M}(\mathbb{T}^n) \Longleftrightarrow\ & \forall \varepsilon > 0,\ \exists C_\varepsilon > 0;\ \forall f\in\mathcal{E}_\mathscr{M}(\mathbb{T}^p),\ \forall \xi\in\mathbb{Z}^q \label{DM_fourier_total2} \\
		& \ \,  |\langle \widehat u(t,\xi),f\rangle |\leq C_{\varepsilon,h}\|f\|_{\mathscr{M},h}\,\sup_{j\in\mathbb{N}_0}\dfrac{\varepsilon^j\,(1+\|\xi\|)^j}{m_j\,j!}. \nonumber
	\end{align}

	\section{Constant coefficient Operators}\label{ccoef_ultra}
	
	Consider the operator $P: \mathcal{E}_{\mathscr{M}}(\mathbb{T}^n) \to \mathcal{E}_{\mathscr{M}}(\mathbb{T}^n)$ defined by
	\begin{equation}\label{P_ultra}
		Pu = Lu - Au - B\bar{u},
	\end{equation}
	where $A, B \in \mathbb{C}$, and $L$ is an operator of the form
	$$L = \sum_{0<|\alpha|\leqslant m} c_\alpha \partial^\alpha,$$
	with $c_\alpha \in \mathbb{C}$ for all $\alpha \in \mathbb{N}_0^n$ satisfying $0 < |\alpha| \leqslant m$, and with symbol 	
	$$\sigma_L(\xi) = \sum_{0<|\alpha|\leqslant m} i^{|\alpha|} c_\alpha \xi^\alpha, \ \xi \in \mathbb{Z}^n.$$
	
	By the continuity of $P$, if $u(x) = \sum_{\xi \in \mathbb{Z}^n} \widehat{u}(\xi)e^{i\xi\cdot x}\in \mathscr{D}'_{\mathscr{M}}(\mathbb{T}^n)$, we have
	\begin{align*}
		Pu(x) = & \sum_{\xi \in \mathbb{Z}^n} \left(\sigma_L(\xi) - A\right) \widehat{u}(\xi) e^{i\xi\cdot x} - B \sum_{\xi \in \mathbb{Z}^n} \overline{\widehat{u}(-\xi)} e^{-i\xi\cdot x}.
	\end{align*}
	
	Therefore, the Fourier coefficients of any solution of the equation $Pu = f$ must satisfy
	$$(\sigma_L(\xi) - A) \widehat{u}(\xi) - B \overline{\widehat{u}(-\xi)} = \widehat{f}(\xi).$$
	
	Taking the conjugate of the previous equation for $-\xi \in \mathbb{Z}^n$, we obtain
	$$
	\overline{\widehat{f}(-\xi)} = (\overline{\sigma_L(-\xi)} - \bar{A}) \overline{\widehat{u}(-\xi)} - \bar{B} \widehat{u}(\xi),
	$$
	which gives us, for each $\xi \in \mathbb{Z}^n$, the following linear system:
	$$
	\begin{cases}
		\left(\sigma_L(\xi) - A\right) \widehat{u}(\xi) - B \overline{\widehat{u}(-\xi)} = \widehat{f}(\xi), \\
		-\bar{B} \widehat{u}(\xi) + (\overline{\sigma_L(-\xi)} - \bar{A}) \overline{\widehat{u}(-\xi)} = \overline{\widehat{f}(-\xi)}.
	\end{cases}
	$$
	
	Solving this system for $\widehat{u}(\xi)$, we obtain
	\begin{equation}\label{DeltaJuJ_ultra} 
		\Delta_\xi \widehat{u}(\xi) = (\overline{\sigma_L(-\xi)} - \bar{A}) \widehat{f}(\xi) + B \overline{\widehat{f}(-\xi)},
	\end{equation}
	where $\Delta_\xi$ is the discriminant of the above system, that is,
	\begin{equation}\label{DeltaJ_ultra} 
		\Delta_\xi = \left(\sigma_L(\xi) - A\right)(\overline{\sigma_L(-\xi)} - \bar{A}) - |B|^2.
	\end{equation}
	
	Observe that $\overline{\Delta_\xi} = \Delta_{-\xi}$ for all $\xi \in \mathbb{Z}^n$. In particular, $\Delta_\xi = 0$ if, and only if, $\Delta_{-\xi} = 0$.
	
	Inspired by the references \cite{BDM,AD,KMT24indag}, we introduce the following notion of solvability and the main result of this section.
	
	\begin{definition}
		We say that the differential operator $P: \mathcal{E}_{\mathscr{M}}(\mathbb{T}^n) \to \mathcal{E}_{\mathscr{M}}(\mathbb{T}^n)$ defined in (\ref{P_ultra}) is $\mathscr{M}$-solvable if there exists a subspace $\mathcal{F} \subset \mathcal{E}_{\mathscr{M}}(\mathbb{T}^n)$ of finite codimension such that, for all $f \in \mathcal{F}$, there exists $u \in \mathcal{E}_{\mathscr{M}}(\mathbb{T}^n)$ satisfying $Pu = f$.
	\end{definition}
	
	\begin{theorem}\label{teo_ccoef_gen_ultra}
		The operator $P$ is $\mathscr{M}$-solvable if and only if for all $\varepsilon > 0$, there exist constants $C_\varepsilon, \gamma_\varepsilon > 0$ such that
		\begin{equation}
			\|\xi\| \geq \gamma_\varepsilon \ \Rightarrow \ |\Delta_\xi| \geq C_\varepsilon \inf_{j \in \mathbb{N}_0}  \dfrac{m_j \,j!}{\varepsilon^j (1 + \|\xi\|)^j} . \tag{DC$\mathscr{M}$} \label{DC_M-condition}
		\end{equation}
	\end{theorem}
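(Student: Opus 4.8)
The plan is to work entirely on the Fourier side, turning $Pu=f$ into the decoupled family of $2\times 2$ systems that produced \eqref{DeltaJuJ_ultra}--\eqref{DeltaJ_ultra}, and to read membership in $\mathcal{E}_{\mathscr{M}}(\mathbb{T}^n)$ and in $\mathscr{D}'_{\mathscr{M}}(\mathbb{T}^n)$ through the growth/decay characterizations \eqref{EM_fourier_total} and \eqref{DM_fourier_total}. I abbreviate $t=1+\|\xi\|$ and introduce the associated function $\varphi(s)=\sup_{j\geq 0}\log\frac{s^{j}}{m_{j}\,j!}$, so that $\inf_{j}\frac{m_{j}j!}{s^{j}}=e^{-\varphi(s)}$; condition \eqref{EM_fourier_total} then says $f\in\mathcal{E}_{\mathscr{M}}$ iff $|\widehat f(\xi)|\leq C e^{-\varphi(\delta t)}$ for some $C,\delta>0$, \eqref{DM_fourier_total} says $u\in\mathscr{D}'_{\mathscr{M}}$ iff for every $\eta>0$ there is $C_\eta$ with $|\widehat u(\xi)|\leq C_\eta e^{\varphi(\eta t)}$, and (DC$\mathscr{M}$) reads $|\Delta_\xi|\geq C_\varepsilon e^{-\varphi(\varepsilon t)}$ for $\|\xi\|\geq\gamma_\varepsilon$. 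Before starting the two implications I would record two elementary consequences of the moderate-growth hypothesis (item $iii$), which in these terms gives $m_{j+k}(j+k)!\leq(2H)^{j+k}(m_{j}j!)(m_{k}k!)$: a \emph{polynomial absorption} estimate $t^{m}e^{-\varphi(\delta t)}\leq C_{m}\,e^{-\varphi(\delta t/2H)}$, proved by the index shift $j\mapsto j+m$, and a \emph{doubling} inequality $2\varphi(s)\leq\varphi(2Hs)$, proved by writing $e^{2\varphi(s)}=\sup_{p,q}\frac{s^{p+q}}{(m_pp!)(m_qq!)}$ and using the previous estimate on the denominator. Finally, (DC$\mathscr{M}$) with $\varepsilon=1$ forces the finite set $Z=\{\xi:\Delta_\xi=0\}\subseteq\{\|\xi\|<\gamma_1\}$, since the right-hand side is a strictly positive number for each fixed $\xi$.

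\textbf{Sufficiency.} Assume (DC$\mathscr{M}$). Given $f$ with parameter $\delta$, I define $\widehat u(\xi)$ by \eqref{DeltaJuJ_ultra} for $\xi\notin Z$, and handle the finitely many $\xi\in Z$ by prescribing the (finitely many) compatibility conditions that make the corresponding singular systems solvable; these conditions cut out the finite-codimension subspace $\mathcal{F}$ of the definition, and the finitely many modes in $Z$ do not affect the class of $u$. For $\xi\notin Z$ with $\|\xi\|\geq\gamma_\varepsilon$, the numerator of \eqref{DeltaJuJ_ultra} is bounded using $|\overline{\sigma_L(-\xi)}-\bar A|\leq C t^{m}$ and $|\widehat f(\pm\xi)|\leq C e^{-\varphi(\delta t)}$, so that $|\widehat u(\xi)|\leq \frac{C't^{m}e^{-\varphi(\delta t)}}{C_\varepsilon e^{-\varphi(\varepsilon t)}}$. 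Polynomial absorption turns the numerator into $C''e^{-\varphi(\delta t/2H)}$, and choosing $\varepsilon=\delta/(4H^2)$ the doubling inequality gives $2\varphi(\varepsilon t)\leq\varphi(2H\varepsilon t)=\varphi(\delta t/2H)$, whence $\varphi(\varepsilon t)-\varphi(\delta t/2H)\leq-\varphi(\varepsilon t)$ and $|\widehat u(\xi)|\leq C''' e^{-\varphi(\varepsilon t)}$. By \eqref{EM_fourier_total} this places $u\in\mathcal{E}_{\mathscr{M}}(\mathbb{T}^n)$, and by construction $Pu=f$.

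\textbf{Necessity.} I argue contrapositively: if (DC$\mathscr{M}$) fails there is $\varepsilon_0\in(0,1]$ such that for all $C,\gamma$ there is $\xi$ with $\|\xi\|\geq\gamma$ and $|\Delta_\xi|<C e^{-\varphi(\varepsilon_0 t)}$. Recursively I extract $\xi_k$ with $\|\xi_k\|\uparrow\infty$, the $\pm\xi_k$ pairwise distinct, and $|\Delta_{\xi_k}|<\frac1k e^{-\varphi(\varepsilon_0 t_k)}$, and I split $\{\xi_k\}$ into infinitely many infinite families $S_1,S_2,\dots$. On each $S_\ell$ I build $f_\ell\in\mathcal{E}_{\mathscr{M}}$ by placing Fourier mass of size $e^{-\varphi(\delta t_k)}$, with $\delta=\varepsilon_0/(4H^2)$, on exactly one of $\xi_k,-\xi_k$, chosen to avoid cancellation in \eqref{DeltaJuJ_ultra}: when $B\neq 0$ I load $\widehat f(-\xi_k)$ so the numerator is $B\overline{\widehat f(-\xi_k)}$, and when $B=0$ I load the side where $|\sigma_L(\pm\xi_k)-A|$ is the smaller factor of $\Delta_{\xi_k}=(\sigma_L(\xi_k)-A)(\overline{\sigma_L(-\xi_k)}-\bar A)$. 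In either case any solution $u$ is forced to satisfy, along $S_\ell$, $|\widehat u(\xi_k)|>k\,e^{\varphi(\delta t_k)}$ (with $\sqrt k$ in place of $k$ and a factor $\tfrac12$ in the exponent when $B=0$), using the doubling inequality and $4H^{2}\delta=\varepsilon_0$; this contradicts \eqref{DM_fourier_total} with $\eta=\delta$, so $u\notin\mathscr{D}'_{\mathscr{M}}\supseteq\mathcal{E}_{\mathscr{M}}$. The same forcing survives in every nontrivial finite combination $\sum_{\ell\in F}a_\ell f_\ell$, since on each $S_\ell$ with $a_\ell\neq 0$ the lower bound persists up to the constant $|a_\ell|$; hence the classes of the $f_\ell$ are linearly independent in $\mathcal{E}_{\mathscr{M}}/\operatorname{Range}(P)$, so $\operatorname{Range}(P)$ has infinite codimension and $P$ is not $\mathscr{M}$-solvable.

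The routine part is the sufficiency estimate, which is mechanical once the two moderate-growth facts are isolated. I expect the genuine obstacle to be the necessity direction: converting the merely \emph{sparse} smallness of $\Delta_\xi$ into a \emph{global} obstruction that is immune to removing a finite-codimension subspace. This is exactly why a single bad mode (or finitely many) is useless—such $f$ are always solvable—and forces both the cancellation-free placement of Fourier mass and the device of splitting the bad set into infinitely many infinite families, so that every nontrivial finite combination still carries infinitely many forcing modes.
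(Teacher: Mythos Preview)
Your proof is correct and follows the same architecture as the paper's: work on the Fourier side via \eqref{DeltaJuJ_ultra}--\eqref{DeltaJ_ultra}, use the moderate-growth doubling inequality (your $2\varphi(s)\leq\varphi(2Hs)$ is the paper's Lemma~\ref{lema2.25}), and in the necessity direction manufacture infinitely many independent obstructions by splitting the bad frequency set into infinitely many infinite subsets.

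The one substantive difference is in the necessity construction. The paper separates a \emph{Case 1} ($\Delta_{\xi_\ell}=0$ infinitely often), arguing directly via infinitely many compatibility conditions, and in \emph{Case 2} makes the elegant explicit choice $\widehat f(\xi)=\Delta_\xi$ (respectively $\widehat f(\xi)=\sigma_L(\xi)-A$ when $B=0$); this forces the solution's coefficients to equal $B$ or $\overline{\sigma_L(-\xi)}-\bar A$ exactly, so $u\notin C^\infty(\mathbb{T}^n)\supseteq\mathcal{E}_\mathscr{M}$ without any further estimate. Your route instead loads generic mass $e^{-\varphi(\delta t_k)}$ and uses the doubling inequality a second time to get $|\widehat u(\xi_k)|\gtrsim\sqrt{k}\,e^{\varphi(\delta t_k)}$, which yields the stronger conclusion $u\notin\mathscr{D}'_\mathscr{M}$ and absorbs the $\Delta_{\xi_k}=0$ case automatically (the system is then inconsistent rather than merely unbounded). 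The paper's choice is slicker and avoids the second doubling step; yours is more uniform and, as you note, yields the global hypoellipticity corollary in the same breath. Both are valid.
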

	

	\begin{proof}
		Firstly, observe that 
		\begin{align*}
			\sup_{j \in \mathbb{N}_0} \dfrac{\varepsilon^j (1 + \|\xi\|)^j}{m_j \,j!} \leq \sum_{j \in \mathbb{N}_0} \dfrac{\varepsilon^j (1 + \|\xi\|)^j}{m_j \,j!} \leq \sum_{j \in \mathbb{N}_0} \dfrac{\varepsilon^j (1 + \|\xi\|)^j}{j!} = e^{\varepsilon (1 + \|\xi\|)}.
		\end{align*}
		
		Therefore, for any $\varepsilon > 0$ we have 
		\begin{align}
			\inf_{j \in \mathbb{N}_0} \dfrac{m_j \,j!}{\varepsilon^j (1 + \|\xi\|)^j} = \left( \sup_{j \in \mathbb{N}_0} \dfrac{\varepsilon^j (1 + \|\xi\|)^j}{m_j \,j!} \right)^{-1} > 0 \label{inf-positive}
		\end{align}
		
		Let us assume that \eqref{DC_M-condition} holds. Then, the set
		\begin{equation*} 
			\Omega = \{\xi \in \mathbb{Z}^n \, : \, \Delta_\xi = 0\}
		\end{equation*}
		is finite. This implies that the subspace
		\begin{equation*} 
			\mathcal{F} = \{f \in \mathcal{E}_{\mathscr{M}}(\mathbb{T}^n) \, : \, \widehat{f}(\xi) = 0 \text{ for all } \xi \in \Omega\}
		\end{equation*}
		has finite codimension in $\mathcal{E}_{\mathscr{M}}(\mathbb{T}^n)$.
		
		Given $f \in \mathcal{F}$, by \eqref{EM_fourier_total}, there exist $C, \delta > 0$ such that, for all $\xi \in \mathbb{Z}^n$,
		\begin{equation} 
			|\widehat{f}(\xi)| \leq C \,\inf_{j \in \mathbb{N}_0} \dfrac{m_j \,j!}{\delta^j (1 + \|\xi\|)^j}. \label{f-hat-the-solv}
		\end{equation}
		
		Let $H > 0$ be the value specified in condition {\it iii.} of the definition of a weight sequence on page \pageref{Msequence}, and set $\varepsilon = \delta / H$ in condition \eqref{DC_M-condition}. Then, there exist constants $C_\varepsilon, \gamma_\varepsilon > 0$ such that
		\begin{equation} 
			\|\xi\| \geq \gamma_\varepsilon \Rightarrow |\Delta_\xi| \geq C_\varepsilon \,\inf_{j \in \mathbb{N}_0} \dfrac{m_j \,j!}{\varepsilon^j (1 + \|\xi\|)^j}. \label{Delta-thm-solv}
		\end{equation}

	Let $u \in \mathscr{D}_{\mathscr{M}}'(\mathbb{T}^n)$ be a solution of $Pu = f$. For $\|\xi\| \geq \gamma_\varepsilon$, we have $\xi \in \mathbb{Z}^n \setminus \Omega$. Therefore, it follows from \eqref{DeltaJuJ_ultra}, \eqref{f-hat-the-solv}, and \eqref{Delta-thm-solv} that
	\begin{align*}
		|\widehat{u}(\xi)| &= |\Delta_\xi|^{-1} \big|[\overline{\sigma_L(-\xi)} - \bar{A}]\widehat{f}(\xi) + B\overline{\widehat{f}(-\xi)}\big| \\
		&\leq \dfrac{C}{C_\varepsilon} \left(|\overline{\sigma_L(-\xi)} - \bar{A}| + |B|\right) \inf_{j \in \mathbb{N}_0} \dfrac{m_j \,j!}{\delta^j (1 + \|\xi\|)^j} \sup_{j \in \mathbb{N}_0} \dfrac{(\delta/H)^j (1 + \|\xi\|)^j}{m_j \,j!}.
	\end{align*}
	
	Notice that $\sigma_L$ is a polynomial of degree $N$, where $N$ is the order of the operator $L$. Thus, there exists a constant $C > 0$ such that
	\begin{equation*}
		|\overline{\sigma_L(-\xi)} - \bar{A}| + |B| \leq C(1 + \|\xi\|)^N, \quad \xi \in \mathbb{Z}^n.
	\end{equation*}
	
	Moreover, it follows from Lemma \ref{lema2.25} that
	\begin{align*}
		& \inf_{j \in \mathbb{N}_0} \dfrac{m_j \,j!}{\delta^j (1 + \|\xi\|)^j} \ \sup_{j \in \mathbb{N}_0} \dfrac{(\delta/H)^j (1 + \|\xi\|)^j}{m_j \,j!} \\ 
		&\leq \left(\inf_{j \in \mathbb{N}_0} \dfrac{m_j \,j!}{(\delta/H)^j (1 + \|\xi\|)^j}\right)^2 \left(\inf_{j \in \mathbb{N}_0} \dfrac{m_j \,j!}{(\delta/H)^j (1 + \|\xi\|)^j}\right)^{-1} \\
		&= \inf_{j \in \mathbb{N}_0} \dfrac{m_j \,j!}{(\delta/H)^j (1 + \|\xi\|)^j}.
	\end{align*}

	Again, from Lemma \ref{lema2.25}, we have
	\begin{align*}
		|\widehat u(\xi)|\ \ \leq\ \ & C(1+\|\xi\|)^N\inf_{j\in\mathbb{N}_0}\dfrac{m_j\,j!}{(\delta/H)^j(1+\|\xi\|)^j}\\
		\leq\ \ & C(1+\|\xi\|)^N \left(\inf_{j\in\mathbb{N}_0}\dfrac{m_j\,j!}{(\delta/H^2)^j (1+\|\xi\|)^j}\right)^2\\
		\leq\ \ & C(1+\|\xi\|)^N \ \dfrac{m_N\,N!}{(\delta/H^2)^N(1+\|\xi\|)^N} \ \inf_{j\in\mathbb{N}_0}\dfrac{m_j\,j!}{(\delta/H^2)^j(1+\|\xi\|)^j}\\
		\leq\ \ & C \, h^N\, m_N\, N!\, \inf_{j\in\mathbb{N}_0}\dfrac{m_j\,j!}{\delta_0^j(1+\|\xi\|)^j},
	\end{align*}
	with $h=H/\delta$ and $\delta_0=\delta/H^2$. 
	
	Therefore, $u\in\mathcal{E}_{\mathscr{M}}(\mathbb{T}^n)$ and $Pu=f$, that is, $P$ is $\mathscr{M}$-solvable.

On the other hand, suppose that \eqref{DC_M-condition} does not hold. Then, there is $\varepsilon > 0$ such that for each $\ell \in \mathbb{N}$, there is $\xi_\ell \in \mathbb{Z}^n$ with $\|\xi_\ell\| \geq \ell$ and 
\begin{equation}\label{DeltaJ_decay_ultra}
	|\Delta_{\xi_\ell}| < \inf_{j \in \mathbb{N}_0} \dfrac{m_j \,j!}{\varepsilon^j (1 + \|\xi_\ell\|)^j}.
\end{equation}

Denoting $\xi_{m\ell}$ as the $m$-th coordinate of $\xi_\ell$, we can choose $\{\xi_\ell\}_{\ell \in \mathbb{N}}$ such that all $\xi_{m\ell}$ are non-zero and have the same sign for some $m \in \{1, \dots, n\}$.

	\medskip\noindent
	\textbf{Case 1:} $\Delta_{\xi_\ell}=0$ for infinitely many $\ell\in\mathbb{N}$.
	
	Passing to a subsequence, we may assume that $\Delta_{\xi_\ell}=0$ for all $\ell\in\mathbb{N}$. Thus, from \eqref{DeltaJuJ_ultra}, we have
		$(\overline{\sigma_L(-\xi)}-\bar A) \widehat{f}(\xi) + B\overline{\widehat{f}(-\xi)} = 0,$ for $\xi\in\Omega.$
	
	If $B \neq 0$, from \eqref{DeltaJ_ultra}, we have $(\sigma_L(\xi_\ell)-A)\,(\overline{\sigma_L(-\xi_\ell)}-\bar A) = |B|^2$ for all $\ell\in\mathbb{N}$. Since $|B|^2 \neq 0$, we conclude that $\sigma_L(\xi_\ell)-A \neq 0$ and $\overline{\sigma_L(-\xi_\ell)} - \bar A \neq 0$ for all $\ell\in\mathbb{N}$. In this context, there are infinitely many compatibility conditions for the Fourier coefficients of $f \in \mathcal{C}^\infty(\mathbb{T}^n)$ to satisfy $Pu = f$. Consequently, $P$ is not $\mathscr{M}$-solvable.
	
	If $B = 0$, the Fourier coefficients of $u$ and $f$ must satisfy $(\sigma_L(\xi) - A) \widehat{u}(\xi) = \widehat{f}(\xi)$ and $(\overline{\sigma_L(-\xi)} - \bar A) \overline{\widehat{u}(-\xi)} = \overline{\widehat{f}(-\xi)}$ for all $\xi \in \mathbb{Z}^n$. Since $[\sigma_L(\xi_\ell) - A] \,[\overline{\sigma_L(-\xi_\ell)} - \bar A] = \Delta_{\xi_\ell} = 0$ for all $\ell\in\mathbb{N}$, at least one of the following must hold:  
	$\sigma_L(\xi_\ell) - A = 0 \quad \text{or} \quad \overline{\sigma_L(-\xi_\ell)} - \bar A = 0,$ 
	for each $\ell \in \mathbb{N}$. This implies that for infinitely many indices $\xi \in \mathbb{Z}^n$, either $\widehat{f}(\xi) = 0$ or $\widehat{f}(-\xi) = 0$. Therefore, there are infinitely many compatibility conditions for the Fourier coefficients of $f$ such that $Pu = f$ has a smooth solution. Consequently, $P$ is not solvable.

\medskip\noindent
\textbf{Case 2:} $\Delta_{\xi_\ell} = 0$ for a finite number of indices $\ell \in \mathbb{N}$.

Passing to a subsequence, we may assume that $\Delta_{\xi_\ell} \neq 0$ for all $\ell \in \mathbb{N}$, and consider the set $\Omega = \{\xi_\ell \in \mathbb{Z}^n : \ell \in \mathbb{N}\}$. Observe that $\xi \in \Omega \Leftrightarrow -\xi \notin \Omega$, and take an infinite subset $\Omega_0 \subset \Omega$.

Let us assume first that $B \neq 0$ and consider the function 
$$
f(x) = \sum_{\xi \in \Omega_0} \Delta_\xi e^{i \xi \cdot x}.
$$

It follows from \eqref{DeltaJ_decay_ultra} and \eqref{EM_fourier_total} that $f \in \mathcal{E}_{\mathscr{M}}(\mathbb{T}^n)$. If $u \in \mathscr{D}_{\mathscr{M}}'(\mathbb{T}^n)$ is a solution of $Pu = f$, proceeding as in \cite{KMT24indag}, the projection of $u$ on the subspace $\mathscr{D}_{\mathscr{M}}'(\mathbb{T}^n)$ generated by the frequencies $\pm \Omega_0$ is given by
$$
v(x) = \sum_{\xi \in \Omega_0} B e^{-i \xi \cdot x} + \sum_{\xi \in \Omega_0} [\overline{\sigma_L(-\xi)} - \bar A] e^{i \xi \cdot x} \in \mathscr{D}'(\mathbb{T}^n) \setminus C^\infty(\mathbb{T}^n).
$$
Then, $v \in \mathscr{D}_{\mathscr{M}}'(\mathbb{T}^n) \setminus \mathcal{E}_{\mathscr{M}}(\mathbb{T}^n)$, which implies $u \in \mathscr{D}_{\mathscr{M}}'(\mathbb{T}^n) \setminus \mathcal{E}_{\mathscr{M}}(\mathbb{T}^n)$.

Now consider the case where $B = 0$. For all $\xi \in \Omega_0$, observe that
$$
\left( \inf_{j \in \mathbb{N}_0} \dfrac{m_j \,j!}{(\varepsilon / H)^j (1 + \|\xi\|)^j} \right)^2 \geq \inf_{j \in \mathbb{N}_0} \dfrac{m_j \,j!}{\varepsilon^j (1 + \|\xi\|)^j} > |\Delta_\xi| = |\sigma_L(\xi) - A| \,|\sigma_L(-\xi) - A|.
$$

Therefore, either
$$
|\sigma_L(\xi) - A| < \inf_{j \in \mathbb{N}_0} \dfrac{m_j \,j!}{(\varepsilon / H)^j (1 + \|\xi\|)^j}  \quad \text{or} \quad 
|\sigma_L(-\xi) - A| < \inf_{j \in \mathbb{N}_0} \dfrac{m_j \,j!}{(\varepsilon / H)^j (1 + \|\xi\|)^j}
$$
holds for infinitely many $\xi \in \Omega_0$.

If 
$$
|\sigma_L(\xi) - A| < \inf_{j \in \mathbb{N}_0} \dfrac{m_j \,j!}{(\varepsilon / H)^j (1 + \|\xi\|)^j}
$$
holds for infinitely many $\xi \in \Omega_0$, we can pass to a subsequence and assume that this inequality holds for all $\xi \in \Omega_0$. This implies that the function 
$$
f(x) = \sum_{\xi \in \Omega_0} [\sigma_L(\xi) - A] e^{i \xi \cdot x}
$$
belongs to $\mathcal{E}_{\mathscr{M}}(\mathbb{T}^n)$. If $u \in \mathscr{D}_{\mathscr{M}}'(\mathbb{T}^n)$ is a solution of $Pu = f$, proceeding as in \cite{KMT24indag}, the projection of $u$ on the subspace $\mathscr{D}_{\mathscr{M}}'(\mathbb{T}^n)$ generated by the frequencies in $\Omega_0$ is given by
$$
v(x) = \sum_{\xi \in \Omega_0} e^{i \xi \cdot x} \in \mathscr{D}'(\mathbb{T}^n) \setminus C^{\infty}(\mathbb{T}^n).
$$
Thus, $v \in \mathscr{D}_{\mathscr{M}}'(\mathbb{T}^n) \setminus \mathcal{E}_{\mathscr{M}}(\mathbb{T}^n)$, which implies $u \notin \mathcal{E}_{\mathscr{M}}(\mathbb{T}^n)$.

The case where
$$
|\sigma_L(-\xi) - A| < \inf_{j \in \mathbb{N}_0} \dfrac{m_j \,j!}{(\varepsilon / H)^j (1 + \|\xi\|)^j}
$$
holds for infinitely many $\xi \in \Omega_0$ is analogous.

Finally, note that this construction holds for any infinite subset $\Omega_0 \subset \Omega$, providing us with infinitely many linearly independent functions $f \in \mathcal{E}_{\mathscr{M}}(\mathbb{T}^n)$ such that there are no $u \in \mathcal{E}_{\mathscr{M}}(\mathbb{T}^n)$ satisfying $Pu = f$. Therefore, $P$ is not $\mathscr{M}$-solvable.
\end{proof}

\begin{definition}
	We say that $P$ is $\mathscr{M}$-globally hypoelliptic when the conditions
	$$u\in\mathscr{D}'_\mathscr{M}(\mathbb{T}^n) \mbox{ and }  Pu\in\mathcal{E}_{\mathscr{M}} (\mathbb{T}^n) \mbox{ imply }  u\in\mathcal{E}_{\mathscr{M}}(\mathbb{T}^n).$$
\end{definition}

\begin{corollary}\label{coro_GH_ultra}
	$P$ is $\mathscr{M}$-solvable if, and only if, $P$ is $\mathscr{M}$-globally hypoelliptic.
\end{corollary}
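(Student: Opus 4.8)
The plan is to observe that $\mathscr{M}$-global hypoellipticity is, exactly like $\mathscr{M}$-solvability, equivalent to the Diophantine condition \eqref{DC_M-condition}; the corollary then follows at once from Theorem \ref{teo_ccoef_gen_ultra}. So I would prove the two implications \eqref{DC_M-condition} $\Rightarrow$ ($P$ is $\mathscr{M}$-globally hypoelliptic) and its contrapositive, which together give $\mathscr{M}$-global hypoellipticity $\Leftrightarrow$ \eqref{DC_M-condition} $\Leftrightarrow$ $\mathscr{M}$-solvability.

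For the first implication, assume \eqref{DC_M-condition} and let $u \in \mathscr{D}'_\mathscr{M}(\mathbb{T}^n)$ satisfy $Pu = f \in \mathcal{E}_\mathscr{M}(\mathbb{T}^n)$. Since \eqref{DC_M-condition} forces $\Omega = \{\xi \in \mathbb{Z}^n : \Delta_\xi = 0\}$ to be finite, the part of $u$ carried by $\Omega$ is a trigonometric polynomial, hence lies in $\mathcal{E}_\mathscr{M}(\mathbb{T}^n)$. For $\|\xi\| \geq \gamma_\varepsilon$ I would solve \eqref{DeltaJuJ_ultra} for $\widehat u(\xi)$ and estimate $|\widehat u(\xi)|$ verbatim as in the first half of the proof of Theorem \ref{teo_ccoef_gen_ultra}: bound $|\overline{\sigma_L(-\xi)} - \bar A| + |B|$ by $C(1 + \|\xi\|)^N$, combine the decay of $\widehat f$ from \eqref{EM_fourier_total} with \eqref{DC_M-condition}, and absorb the polynomial factor through Lemma \ref{lema2.25}. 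This produces the Fourier-side bound \eqref{EM_fourier_total} for $u$, so $u \in \mathcal{E}_\mathscr{M}(\mathbb{T}^n)$ and $P$ is $\mathscr{M}$-globally hypoelliptic. This half is essentially a rerun of the solvability estimate.

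For the converse I argue by contraposition, reusing the functions built in Cases 1 and 2 of the proof of Theorem \ref{teo_ccoef_gen_ultra}. In Case 2 with $B \neq 0$, taking $f(x) = \sum_{\xi \in \Omega_0} \Delta_\xi e^{i\xi \cdot x} \in \mathcal{E}_\mathscr{M}(\mathbb{T}^n)$, I would let $u$ be precisely the projection $v$ already exhibited and check, straight from \eqref{DeltaJ_ultra} and $\overline{\Delta_\xi} = \Delta_{-\xi}$, that $\widehat{Pv}(\xi) = \Delta_\xi$ at the frequencies of $\Omega_0$ and $\widehat{Pv}(-\xi) = 0$ at the conjugate frequencies, so that $Pv = f \in \mathcal{E}_\mathscr{M}(\mathbb{T}^n)$ while $v \in \mathscr{D}'_\mathscr{M}(\mathbb{T}^n) \setminus \mathcal{E}_\mathscr{M}(\mathbb{T}^n)$; the analogous verification with $f(x) = \sum_{\xi \in \Omega_0} [\sigma_L(\xi) - A] e^{i\xi \cdot x}$ and $v(x) = \sum_{\xi \in \Omega_0} e^{i\xi \cdot x}$ settles $B = 0$. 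In Case 1, where $\Delta_\xi = 0$ for infinitely many $\xi$, I would instead exhibit a non-ultradifferentiable element of the kernel of $P$: solving the (singular) homogeneous system at each such $\xi$ leaves $\widehat u(\xi)$ free, so choosing non-decaying values yields $u \in \mathscr{D}'_\mathscr{M}(\mathbb{T}^n) \setminus \mathcal{E}_\mathscr{M}(\mathbb{T}^n)$ with $Pu = 0 \in \mathcal{E}_\mathscr{M}(\mathbb{T}^n)$. In every case such a $u$ witnesses the failure of $\mathscr{M}$-global hypoellipticity.

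The one genuinely new computation—and the step I expect to be the main obstacle—is confirming that these witnesses actually satisfy $Pv \in \mathcal{E}_\mathscr{M}(\mathbb{T}^n)$: the solvability proof only needed that no $\mathcal{E}_\mathscr{M}$-solution exists, whereas here I must produce an explicit $\mathscr{D}'_\mathscr{M}$-solution whose image is ultradifferentiable. This is short but requires careful bookkeeping of the frequencies $\xi$ and $-\xi$ together via the identity \eqref{DeltaJ_ultra}. Everything else is either finite-dimensional (the contribution of $\Omega$) or a faithful repetition of the estimates already carried out for Theorem \ref{teo_ccoef_gen_ultra}.
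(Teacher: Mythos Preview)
Your proposal is correct and follows the same route as the paper: both arguments amount to the observation that the proof of Theorem~\ref{teo_ccoef_gen_ultra} already establishes the equivalence of $\mathscr{M}$-global hypoellipticity with \eqref{DC_M-condition}. The paper's proof is extremely terse---it simply points back to Theorem~\ref{teo_ccoef_gen_ultra} and notes that the estimates in the sufficiency half were carried out for an arbitrary $u\in\mathscr{D}'_\mathscr{M}(\mathbb{T}^n)$, while the necessity half already produced an $f\in\mathcal{E}_\mathscr{M}(\mathbb{T}^n)$ whose $\mathscr{D}'_\mathscr{M}$-solutions lie outside $\mathcal{E}_\mathscr{M}(\mathbb{T}^n)$.

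Your version is more explicit on two points that the paper glosses over. First, in Case~2 you actually verify that the projection $v$ satisfies $Pv=f$ (via \eqref{DeltaJ_ultra} and $\overline{\Delta_\xi}=\Delta_{-\xi}$), whereas the paper only shows that \emph{any} solution must project onto this $v$; your check confirms that a $\mathscr{D}'_\mathscr{M}$-solution exists. Second, in Case~1 the paper's theorem proof only exhibits infinitely many compatibility conditions (showing non-solvability), and the corollary's appeal to it for non-hypoellipticity is not fully justified there; your construction of a non-ultradifferentiable kernel element (normalizing the nontrivial null vectors of the rank-one systems at the $\xi_\ell$) cleanly fills this gap. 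So your added bookkeeping is not a different strategy but a welcome completion of what the paper leaves implicit.
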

\begin{proof}
If $P$ is $\mathscr{M}$-solvable, we have shown in Theorem \ref{teo_ccoef_gen_ultra} that for any $f\in\mathcal{E}_{\mathscr{M}}(\mathbb{T}^n)$, if $u\in\mathscr{D}_{\mathscr{M}}'(\mathbb{T}^{n})$ is a solution of $Pu=f$, then $u\in\mathcal{E}_{\mathscr{M}}(\mathbb{T}^n)$. Therefore, $P$ is $\mathscr{M}$-globally hypoelliptic.

Conversely, if $P$ is not $\mathscr{M}$-solvable, according to Theorem \ref{teo_ccoef_gen_ultra}, we can find a function $f\in\mathcal{E}_{\mathscr{M}}(\mathbb{T}^n)$ such that if $u\in \mathscr{D}_{\mathscr{M}}'(\mathbb{T}^{n})$ is a solution of $Pu=f$, then $u\notin \mathcal{E}_{\mathscr{M}}(\mathbb{T}^{n})$. Therefore, $P$ is not $\mathscr{M}$-globally hypoelliptic.
\end{proof}

In the papers \cite{BDM} and \cite{KMT24indag}, the notion of (smooth) solvability was introduced as follows:
An operator $P: \mathcal{C}^\infty(\mathbb{T}^n) \to \mathcal{C}^\infty(\mathbb{T}^n)$ is said to be solvable if there exists a subspace $\mathcal{F} \subset \mathcal{C}^\infty(\mathbb{T}^n)$ of finite codimension such that for all $f \in \mathcal{F}$, there exists $u \in \mathcal{C}^\infty(\mathbb{T}^n)$ such that $Pu = f$. 

Moreover, it was shown that $P$ is solvable if and only if there exists $\gamma > 0$ such that 
$$
\|\xi\| \geq \gamma \ \Rightarrow \ |\Delta_\xi| \geq (1 + \|\xi\|)^{-\gamma}.
$$

Now, observe that for any $\varepsilon > 0$ we have
$$
\dfrac{1}{(1 + \|\xi\|)^{\gamma}} = \dfrac{\varepsilon^\gamma}{m_\gamma \,\gamma!} \,\dfrac{m_\gamma \,\gamma!}{\varepsilon^\gamma (1 + \|\xi\|)^{\gamma}} \geq C_\varepsilon \inf_{j \in \mathbb{N}_0} \dfrac{m_j \,j!}{\varepsilon^j (1 + \|\xi\|)^j},
$$
where $C_\varepsilon = \dfrac{\varepsilon^\gamma}{m_\gamma \,\gamma!}$. Therefore, \eqref{DC_M-condition} holds, and we have proved the following result:

\begin{proposition}\label{smooth_ultra}
	If $P$ is solvable, then $P$ is $\mathscr{M}$-solvable.
\end{proposition}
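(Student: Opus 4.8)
The plan is to reduce the statement to the criterion already established in Theorem \ref{teo_ccoef_gen_ultra}: since $\mathscr{M}$-solvability is equivalent to condition \eqref{DC_M-condition}, it suffices to show that the smooth solvability hypothesis forces \eqref{DC_M-condition} to hold. So I would start by writing out what smooth solvability gives us, namely a constant $\gamma > 0$ such that $\|\xi\| \geq \gamma$ implies $|\Delta_\xi| \geq (1 + \|\xi\|)^{-\gamma}$, and then aim to produce, for each prescribed $\varepsilon > 0$, the constants $C_\varepsilon, \gamma_\varepsilon$ demanded by \eqref{DC_M-condition}.

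The heart of the argument is the elementary observation, already indicated in the paragraph preceding the statement, that an infimum over $j \in \mathbb{N}_0$ is bounded above by any single term of the family. Before using this I would first reduce to the case where $\gamma$ is a positive integer: if the polynomial lower bound holds for some real $\gamma > 0$, then replacing $\gamma$ by an integer $N \geq \gamma$ keeps both the threshold $\|\xi\| \geq N \geq \gamma$ and the inequality $|\Delta_\xi| \geq (1+\|\xi\|)^{-\gamma} \geq (1+\|\xi\|)^{-N}$ valid, since $1 + \|\xi\| \geq 1$. With $\gamma \in \mathbb{N}_0$ the quantities $m_\gamma$ and $\gamma!$ are well defined, and bounding the infimum by its $j = \gamma$ term yields
\[
\inf_{j \in \mathbb{N}_0} \dfrac{m_j \, j!}{\varepsilon^j (1 + \|\xi\|)^j} \leq \dfrac{m_\gamma \, \gamma!}{\varepsilon^\gamma (1 + \|\xi\|)^\gamma},
\]
which rearranges exactly to $(1 + \|\xi\|)^{-\gamma} \geq C_\varepsilon \inf_{j} \frac{m_j\, j!}{\varepsilon^j (1+\|\xi\|)^j}$ with $C_\varepsilon = \varepsilon^\gamma / (m_\gamma \, \gamma!) > 0$.

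Finally I would chain the two inequalities: for $\|\xi\| \geq \gamma_\varepsilon := \gamma$ we get $|\Delta_\xi| \geq (1+\|\xi\|)^{-\gamma} \geq C_\varepsilon \inf_{j} \frac{m_j\, j!}{\varepsilon^j (1+\|\xi\|)^j}$, so \eqref{DC_M-condition} holds for this $\varepsilon$; since $\varepsilon > 0$ was arbitrary the full condition is verified, and Theorem \ref{teo_ccoef_gen_ultra} gives $\mathscr{M}$-solvability. In truth there is no serious obstacle here: the argument is a one-line majorization of the Denjoy-Carleman weight by a single polynomial term. The only points requiring any care are the harmless reduction of $\gamma$ to an integer and the bookkeeping that the estimate must be produced uniformly for \emph{every} $\varepsilon > 0$ (with $C_\varepsilon$ allowed to depend on $\varepsilon$), which the explicit formula $C_\varepsilon = \varepsilon^\gamma/(m_\gamma\,\gamma!)$ supplies directly.
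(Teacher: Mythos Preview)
Your proposal is correct and follows essentially the same argument as the paper: both use the polynomial lower bound from smooth solvability and majorize the infimum by its $j=\gamma$ term to obtain $C_\varepsilon = \varepsilon^\gamma/(m_\gamma\,\gamma!)$, then invoke Theorem~\ref{teo_ccoef_gen_ultra}. Your explicit reduction of $\gamma$ to a positive integer is a nice touch that the paper glosses over but which is needed for $m_\gamma$ and $\gamma!$ to make sense.
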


However, the reciprocal of this statement is generally not true. In \cite{AD}, the authors provided a counterexample involving a constant-coefficient vector field and continued fractions in the context of Gevrey spaces.

\begin{corollary}\label{teo_elliptic_ultra}
	If $L$ is an elliptic differential operator, then $P$ is $\mathscr{M}$-solvable and $\mathscr{M}$-globally hypoelliptic.
\end{corollary}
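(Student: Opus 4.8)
The plan is to avoid re-deriving the Fourier-side estimates from scratch and instead reduce the elliptic case to the smooth solvability criterion recorded just above, then invoke Proposition~\ref{smooth_ultra} and Corollary~\ref{coro_GH_ultra}. Recall that $P$ is (smoothly) solvable precisely when there is $\gamma>0$ with $\|\xi\|\ge\gamma \Rightarrow |\Delta_\xi|\ge (1+\|\xi\|)^{-\gamma}$. Thus it suffices to show that, under ellipticity, $|\Delta_\xi|$ grows polynomially in $\|\xi\|$; any such lower bound trivially dominates the negative power $(1+\|\xi\|)^{-\gamma}$ for large $\|\xi\|$. Once smooth solvability is established, Proposition~\ref{smooth_ultra} yields $\mathscr{M}$-solvability, and Corollary~\ref{coro_GH_ultra} promotes this to $\mathscr{M}$-global hypoellipticity, settling both assertions at once.

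The key estimate is the growth of $|\Delta_\xi|$. Write $\sigma_L=\sigma_L^{(m)}+r$, where $\sigma_L^{(m)}(\xi)=\sum_{|\alpha|=m} i^{m} c_\alpha \xi^\alpha$ is the homogeneous degree-$m$ principal symbol and $r$ collects the lower-order terms. Ellipticity of $L$ means $\sigma_L^{(m)}(\xi)\neq 0$ for all real $\xi\neq 0$, so by homogeneity and compactness of the unit sphere there is $c>0$ with $|\sigma_L^{(m)}(\xi)|\ge c\|\xi\|^m$, while $|r(\xi)|=O(\|\xi\|^{m-1})$. Hence there is $\gamma_0>0$ with $|\sigma_L(\xi)-A|\ge \tfrac{c}{2}\|\xi\|^m$ for $\|\xi\|\ge\gamma_0$. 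Since $\sigma_L^{(m)}(-\xi)=(-1)^m\sigma_L^{(m)}(\xi)$, the same bound holds for $|\overline{\sigma_L(-\xi)}-\bar A|$, and combining them through the reverse triangle inequality in \eqref{DeltaJ_ultra} gives
\[
|\Delta_\xi| \ge |\sigma_L(\xi)-A|\,|\overline{\sigma_L(-\xi)}-\bar A| - |B|^2 \ge \tfrac{c^2}{4}\|\xi\|^{2m} - |B|^2,
\]
which is at least $\tfrac{c^2}{8}\|\xi\|^{2m}$ once $\|\xi\|$ is large. In particular $|\Delta_\xi|\ge (1+\|\xi\|)^{-\gamma}$ for every $\gamma>0$ and all sufficiently large $\|\xi\|$, so the smooth solvability condition holds.

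The only genuinely delicate point is the bound $|\sigma_L(\xi)-A|\ge \tfrac{c}{2}\|\xi\|^m$: one must confirm that subtracting the constant $A$ and adding the remainder $r$ does not spoil the leading behavior, and that the estimate holds simultaneously at $\xi$ and at $-\xi$ so that the product in $\Delta_\xi$ genuinely grows like $\|\xi\|^{2m}$ rather than degenerating. Both are handled by absorbing the $O(\|\xi\|^{m-1})$ and $O(1)$ terms into $c\|\xi\|^m$ for large $\|\xi\|$, using $\sigma_L^{(m)}(-\xi)=(-1)^m\sigma_L^{(m)}(\xi)$ to transfer the bound from $\xi$ to $-\xi$. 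Alternatively, and even more cheaply, one may bypass Proposition~\ref{smooth_ultra} and verify \eqref{DC_M-condition} directly: the right-hand side of \eqref{DC_M-condition} is bounded above by $C_\varepsilon$ (take $j=0$ in the infimum, since $m_0=0!=1$), whereas $|\Delta_\xi|\to\infty$; hence \eqref{DC_M-condition} holds with, say, $C_\varepsilon=1$ for all large $\|\xi\|$, directly giving $\mathscr{M}$-solvability and then $\mathscr{M}$-global hypoellipticity through Corollary~\ref{coro_GH_ultra}.
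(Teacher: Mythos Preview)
Your proof is correct and follows essentially the same route as the paper: establish smooth solvability of $P$ when $L$ is elliptic, then apply Proposition~\ref{smooth_ultra} and Corollary~\ref{coro_GH_ultra}. The only difference is that the paper outsources the smooth solvability step to \cite[Theorem 4]{KMT24indag}, whereas you supply the underlying estimate $|\Delta_\xi|\gtrsim \|\xi\|^{2m}$ directly from the ellipticity of the principal symbol; your alternative of checking \eqref{DC_M-condition} straight away via the $j=0$ term is equally valid and just as short.
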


\begin{proof}
	It follows from \cite[Theorem 4]{KMT24indag} that $P$ is solvable. Therefore, by Corollary \ref{coro_GH_ultra} and Proposition \ref{smooth_ultra}, $P$ is $\mathscr{M}$-solvable and $\mathscr{M}$-globally hypoelliptic.
\end{proof}

\subsection{Examples}

\begin{example}[Laplace operator]
	If $ L = \sum_{j=1}^{n} {\partial^2}/{\partial x_j^2} $, then the operator $ P $ is $\mathscr{M}$-solvable and $\mathscr{M}$-globally hypoelliptic because $ L $ is elliptic.
\end{example}

\begin{example}[Heat operator]
	If $ L = {\partial}/{\partial t} - \eta^2 \sum_{j=1}^{n} {\partial^2}/{\partial x_j^2} $, where $\eta > 0$, then $ P $ is $\mathscr{M}$-solvable and $\mathscr{M}$-globally hypoelliptic because $ P $ is solvable (see \cite[Theorem 5]{KMT24indag}).
\end{example}

\begin{example}[Wave Operator]
	Let $ L = {\partial^2}/{\partial t^2} - \eta^2 \sum_{j=1}^{n} {\partial^2}/{\partial x_j^2} $, with $\eta > 0$. The operator $ P $ is $\mathscr{M}$-solvable and $\mathscr{M}$-globally hypoelliptic if and only if one of the following conditions holds:
	\begin{itemize}
		\item[(1)] $|B| < |\text{Im}(A)|$;
		\item[(2)] $|A| = |B|$, $\text{Re}(A) = 0$, and $\eta$ is an irrational non-Liouville number;
		\item[(3)] \eqref{DC_M-condition} holds.
	\end{itemize}
This result is derived from Corollary \ref{coro_GH_ultra}, Proposition \ref{smooth_ultra}, and the smooth solvability established in \cite[Theorem 6]{KMT24indag}.
\end{example}

\begin{example}[Complex Vector Fields]
	As a direct consequence of Theorem \ref{teo_ccoef_gen_ultra}, we can extend the results involving constant-coefficient complex vector fields obtained in \cite{AD,BDM,KMT24indag} to the framework of ultradifferentiable solvability and regularity. Consider
	\begin{equation*}
		L = \dfrac{\partial}{\partial t} + \sum_{j=1}^{n} C_j \dfrac{\partial}{\partial x_j},
	\end{equation*}
	where $C_j \in \mathbb{C}$ for $j = 1, \ldots, n$. Then, $P$ is $\mathscr{M}$-solvable if and only if, for all $\varepsilon > 0$, there exist $C_\varepsilon, \gamma_\varepsilon > 0$ such that
	\begin{equation*} 
		\|\xi\| + |\tau| \geq \gamma_\varepsilon \ \Rightarrow\ |\Delta_{\xi,\tau}| \geq C_{\varepsilon} \,\inf_{j \in \mathbb{N}_0}  \dfrac{m_j \,j!}{\varepsilon^j (1 + \|\xi\| + |\tau|)^j} ,
	\end{equation*}
	with
	\begin{equation*}\label{Delta_Jk_ultra}
		\Delta_{\xi,\tau} = -|\tau + \xi\cdot C|^2 + |A|^2 - |B|^2 - 2i \, \text{Re}(A(\tau + \xi\cdot\bar{C})).
	\end{equation*}
	
	In particular, if $\text{Im}(C) = 0$, then $P$ is $\mathscr{M}$-solvable if and only if one of the following conditions holds:
	\begin{itemize}
		\item[(1)] $ |B| > |A| $;
		\item[(2)] $ |B| < |A| $ and $ \text{Re}(A) \neq 0 $;
		\item[(3)] (DC$_\mathscr{M}$) holds.
	\end{itemize}
	The proof of this particular case follows the same lines as \cite[Theorem 4]{AD}.
\end{example}

\begin{remark}\label{remark1}
The results for the Gevrey classes obtained by Almeida and Dattori da Silva in \cite{AD} are particular cases of those developed in this paper for the Denjoy-Carleman classes of Roumieu type. In fact, the Gevrey spaces are obtained when we consider the weight sequence $\mathscr{M} = \{(j!)^{s-1}\}_{j \in \mathbb{N}_0}$. It is possible to show that
\begin{equation*}
	t^{1/s} - s \,\log\left(\dfrac{1}{1 - s^{-1}}\right) \leq \sup_{j \in \mathbb{N}_0} \log\left(\dfrac{t^j}{(j!)^s}\right) \leq s \,t^{1/s},
\end{equation*}
which allows us to demonstrate that the condition \eqref{DC_M-condition} is equivalent to the (DC$_s$) condition obtained in \cite{AD}.

\end{remark}

\section{A class of operators with variable coefficients}\label{vcoef_ultra}

Motivated by references \cite{BDM} and \cite{AD}, we examine in this section the $\mathscr{M}$-solvability of a class of Vekua-type operators with variable coefficients.

First, we take the following complex vector field L into consideration:
\begin{equation}\label{L_ultra_geral}
	L = \frac{\partial}{\partial t} - \sum_{j=1}^{n} \left( p_j(t) + i q_j(t) \right) \frac{\partial}{\partial x_j},
\end{equation}
with $p_j, q_j \in \mathcal{E}_{\mathscr{M}}(\mathbb{T}^1; \mathbb{R})$ for $j = 1, \dots, n$.

Assume that $L$ satisfies the Nirenberg-Treves condition $(P)$, implying that $L$ is locally solvable (see \cite{BCH_book, NT67cpam}). Furthermore, there exists a function $q \in \mathcal{E}_{\mathscr{M}} (\mathbb{T}^1; \mathbb{R})$ that does not change sign, and constants $\lambda_1, \ldots, \lambda_n \in \mathbb{R}$ such that $q_j(t) = \lambda_j q(t)$ for all $t \in \mathbb{T}^1$ (see \cite{BDG17jfaa}). Consequently, we can rewrite $L$ as:
\begin{equation}\label{L_ultra_1}
	\widetilde{L} = \frac{\partial}{\partial t} - \sum_{j=1}^{n} \left( p_j(t) + i \lambda_j q(t) \right) \frac{\partial}{\partial x_j}.
\end{equation}
and assume, without loss of generality, that $q\geq 0$. 

Now, set $m(t) = (m_1(t), \dots, m_n(t))$ where
\begin{align*}
	m_j(t) = \int_{0}^{t} (p_j(\tau) - p_{0j}) \, d\tau,  \quad p_{0j} = \frac{1}{2\pi} \int_{0}^{2\pi} p_j(\tau) \, d\tau,  \text{ for } \ j = 1, \dots, n,
\end{align*}
and define the operator $T : \mathscr{D}_{\mathscr{M}}'(\mathbb{T}^{n+1}) \to \mathscr{D}_{\mathscr{M}}'(\mathbb{T}^{n+1})$ by
\begin{equation} \label{automorphism}
	Tu(t, x) = \sum_{\xi \in \mathbb{Z}^n} \widehat{u}(t, \xi) e^{-i m(t) \cdot\xi} e^{i \xi \cdot x},
\end{equation}
where the ultradistribution $u(t, x) = \sum_{\xi \in \mathbb{Z}^n} \widehat{u}(t, \xi) e^{i \xi \cdot x} \in \mathscr{D}_{\mathscr{M}}'(\mathbb{T}^{n+1})$ is expressed in its partial Fourier series representation.

By Theorem 5.12 of \cite{artigo_bruno}, the operator $T$ and its restriction $T|_{\mathcal{E}_{\mathscr{M}}} : \mathcal{E}_{\mathscr{M}} (\mathbb{T}^{n+1}) \to \mathcal{E}_{\mathscr{M}} (\mathbb{T}^{n+1})$ are automorphisms that satisfy
\begin{equation*}
	T\widetilde{L}T^{-1} = \dfrac{\partial}{\partial t} - \sum_{j=1}^n (p_{0j} + i \lambda_j q(t)) \dfrac{\partial}{\partial x_j}.
\end{equation*}

\subsection{The Vekua-type operator} \

Consider the operator $P : \mathcal{E}_{\mathscr{M}} (\mathbb{T}^{n+1}) \to \mathcal{E}_{\mathscr{M}} (\mathbb{T}^{n+1})$ given by
\begin{equation}\label{P_cv_Tn+1_ultra} 
	Pu = Lu - (s(t) + i\delta q(t))u - \alpha q(t) \bar{u},
\end{equation}
with
\begin{equation}\label{L_ultra}
	L = \dfrac{\partial}{\partial t} - \sum_{j=1}^n (p_{0j} + i \lambda_j q(t)) \dfrac{\partial}{\partial x_j} ,
\end{equation}
where the constants $\alpha \in \mathbb{C} \setminus \{0\}$ and $\delta\in \mathbb{R}$, the vectors $\lambda=(\lambda_1, \ldots,\lambda_n)$ and $p_0=(p_{01},\ldots  p_{0n})$  are in $\mathbb{R}^n$, and the functions  $q,s \in \mathcal{E}_{\mathscr{M}} (\mathbb{T}^1; \mathbb{R})$. Recall that $q$ does not change sign, and we can assume $q\geq 0$.  Furthermore, we define
\begin{align*}
	& A_0 \doteq s_0+i\delta q_0, \ 
	B_0 \doteq \alpha q_0, \text{ and } \
	C_0 \doteq 2\pi p_0+i\lambda q_0. \\
	& \text{where } \ q_0 \doteq \int_0^{2\pi}q(\tau)\,d\tau > 0  \text{ and } 
	s_0 \doteq \int_0^{2\pi}s(\tau)\,d\tau,
\end{align*}

\begin{theorem}\label{teo_varcoef_1_Tn+1_ultra}
	Assume that the following conditions hold:
	\begin{itemize}
		\item[(I)] $|\alpha| \neq |\delta|$;
		\item[(II)] There is no $(\xi,\tau) \in \mathbb{Z}^{n+1}$ solution to
		\begin{equation*} 
			\begin{cases}
				\text{Re}(A_0(2\pi \tau + \xi \cdot \overline{C_0})) = 0, \\
				|2\pi \tau + \xi \cdot C_0|^2 = |A_0|^2 - |B_0|^2;
			\end{cases}
		\end{equation*}
		\item[(III)] For all $\varepsilon > 0$, there exist $C_\varepsilon, \gamma_\varepsilon > 0$ such that if $\xi \in \mathbb{Z}^{n}$ and $\|\xi\| \geq \gamma_\varepsilon$, then
		\begin{equation*} 
			\min \left\{ |e^{-\rho_\xi q_0} - e^{s_0 + i 2\pi \xi\cdot p_0}|, |1 - e^{-\rho_\xi q_0 + s_0 + i 2\pi \xi\cdot p_0}| \right\} \geq C_\varepsilon \,\inf_{j \in \mathbb{N}_0} \frac{m_j \,j!}{\varepsilon^j (1 + \|\xi\|)^j},
		\end{equation*}
		where $\rho_\xi \in \{\pm\sqrt{(\lambda \cdot\xi - i \delta)^2 + |\alpha|^2}\}$ we choose to satisfy $\text{Re}(\rho_\xi) \geq 0$.
	\end{itemize}	
	Then, for all $f\in\mathcal{E}_{\mathscr{M}}(\mathbb{T}^{n+1})$, exists $u\in\mathcal{E}_{\mathscr{M}}(\mathbb{T}^{n+1})$ such that $Pu=f$.
\end{theorem}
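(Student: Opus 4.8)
The plan is to pass to partial Fourier series in the variable $x$ and reduce $Pu=f$ to a family, indexed by $\xi\in\mathbb{Z}^n$, of $2\times2$ first-order linear ODE systems on $\mathbb{T}^1$. Writing $u(t,x)=\sum_\xi\widehat u(t,\xi)e^{i\xi\cdot x}$ and using $\widehat{\bar u}(t,\xi)=\overline{\widehat u(t,-\xi)}$, the equation $Pu=f$ is equivalent to asking that, for every $\xi$, the pair $(U_\xi,V_\xi)\doteq(\widehat u(\cdot,\xi),\overline{\widehat u(\cdot,-\xi)})$ be a $2\pi$-periodic solution of
$$
\frac{d}{dt}\begin{pmatrix}U_\xi\\ V_\xi\end{pmatrix}+M_\xi(t)\begin{pmatrix}U_\xi\\ V_\xi\end{pmatrix}=\begin{pmatrix}\widehat f(\cdot,\xi)\\ \overline{\widehat f(\cdot,-\xi)}\end{pmatrix},\qquad M_\xi(t)=b(t)\,I+q(t)\,N,
$$
where $b(t)=-i(\xi\cdot p_0)-s(t)$ and $N=\left(\begin{smallmatrix}\xi\cdot\lambda-i\delta & -\alpha\\ -\bar\alpha & -(\xi\cdot\lambda-i\delta)\end{smallmatrix}\right)$. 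The scalar part $b(t)I$ commutes with $q(t)N$ and $N$ is constant in $t$, so the whole coefficient family commutes and the fundamental matrix is the exact exponential $\Phi_\xi(t)=e^{-B(t)}\exp(-Q(t)N)$, with $B(t)=\int_0^t b$ and $Q(t)=\int_0^t q$.

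Next I would diagonalize $N$. Its eigenvalues are $\pm\rho_\xi$ with $\rho_\xi=\sqrt{(\xi\cdot\lambda-i\delta)^2+|\alpha|^2}$, exactly the quantity in condition (III). Here hypothesis (I) enters: $|\alpha|\neq|\delta|$ forces $\rho_\xi\neq0$ for every $\xi$ (indeed $|\rho_\xi|$ stays bounded away from $0$), so $N$ has distinct eigenvalues and is diagonalizable, $N=S_\xi\,\mathrm{diag}(\rho_\xi,-\rho_\xi)\,S_\xi^{-1}$, with $\|S_\xi\|,\|S_\xi^{-1}\|$ growing at most polynomially in $\|\xi\|$. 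In the eigenbasis the system decouples into two scalar periodic ODEs $w'+(b+\mu q)w=g$ with $\mu=\pm\rho_\xi$, each admitting a unique $2\pi$-periodic solution precisely when $e^{\int_0^{2\pi}(b+\mu q)}\neq1$, i.e. when the monodromy eigenvalues $\nu_\pm=e^{\,s_0+2\pi i(\xi\cdot p_0)\mp\rho_\xi q_0}$ differ from $1$. A direct computation identifies $\nu_\pm=1$ for some $\tau\in\mathbb{Z}$ with the existence of $(\xi,\tau)\in\mathbb{Z}^{n+1}$ solving the system in (II): squaring the relation $\rho_\xi q_0=s_0+2\pi i(\xi\cdot p_0-\tau)$ and separating real and imaginary parts reproduces exactly $|2\pi\tau+\xi\cdot C_0|^2=|A_0|^2-|B_0|^2$ and $\mathrm{Re}(A_0(2\pi\tau+\xi\cdot\overline{C_0}))=0$. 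Thus hypothesis (II) guarantees $\nu_\pm\neq1$ for all $\xi$, so each scalar ODE is uniquely solvable and, undoing the diagonalization, $(U_\xi,V_\xi)$ is determined for every $\xi$; the conjugation symmetry relating the $\xi$- and $(-\xi)$-systems ensures the consistency $V_\xi=\overline{U_{-\xi}}$, so the $\widehat u(t,\xi)$ genuinely assemble into a single ultradistribution $u$.

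The heart of the argument is the quantitative estimate showing $u\in\mathcal{E}_{\mathscr{M}}(\mathbb{T}^{n+1})$ via the partial-Fourier characterization \eqref{EM_fourier_partial}. Writing the periodic solution of each scalar ODE through the standard integral formula whose denominator is $e^{G(2\pi)}-1$ (with $G=\int_0^t(b+\mu q)$), the reciprocals $(1-\nu_\pm)^{-1}$ appear, and these are controlled from below exactly by the two terms inside the minimum in condition (III): the second term is $|1-\nu_+|$, while $|1-\nu_-|=e^{q_0\mathrm{Re}\,\rho_\xi}\,\big|e^{-\rho_\xi q_0}-e^{\,s_0+2\pi i(\xi\cdot p_0)}\big|$ brings in the first term. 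Combining the lower bound from (III) with the upper bound in \eqref{EM_fourier_partial} on $\widehat f(\cdot,\xi)$ yields, for $\|\xi\|\geq\gamma_\varepsilon$, the required infimum-type decay of $\sup_t|U_\xi(t)|$ in $\xi$, against which the polynomial factors $\|S_\xi^{\pm1}\|$ are harmless.

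The delicate point, and the step I expect to be the main obstacle, is to keep these bounds uniform in $\xi$ while avoiding the spurious exponential factor $e^{\rho_\xi Q(t)}$: for the growing eigenmode one must integrate in the direction in which $e^{-\rho_\xi(Q(t)-Q(\tau))}$ decays, using $\mathrm{Re}\,\rho_\xi\geq0$ and the boundedness of $Q$, which is precisely why (III) is stated with the factor $e^{-\rho_\xi q_0}$ rather than its reciprocal. Finally, to obtain the estimates on all $t$-derivatives $\partial_t^k U_\xi$ demanded by \eqref{EM_fourier_partial}, I would differentiate the ODE repeatedly and use that $q,s\in\mathcal{E}_{\mathscr{M}}(\mathbb{T}^1)$ together with the algebra and stability properties of the weight sequence $\mathscr{M}$ (conditions $i$--$iii$) to absorb the derivatives of the coefficients and of $S_\xi$. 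This establishes $u\in\mathcal{E}_{\mathscr{M}}(\mathbb{T}^{n+1})$ with $Pu=f$.
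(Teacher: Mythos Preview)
Your proposal is correct and follows essentially the same route as the paper: partial Fourier series in $x$, reduction to the $2\times2$ system for $(\widehat u(\cdot,\xi),\overline{\widehat u(\cdot,-\xi)})$, diagonalization via the eigenvalues $\pm\rho_\xi$ (with (I) giving $|\rho_\xi|\geq C_\rho>0$), and the explicit periodic solution of the two scalar equations, where (II) rules out monodromy eigenvalue $1$ and (III) bounds the denominators. The only methodological difference is in the last step: the paper obtains the $\mathcal{E}_{\mathscr{M}}$-estimates on $\partial_t^N\widehat u(t,\xi)$ not by differentiating the ODE recursively, but by differentiating the explicit integral representation $\widehat u(t,\xi)=\alpha e^{i(\xi\cdot p_0)t+S(t)}(z_{1\xi}+z_{2\xi})$ directly, using the Fa\`a di Bruno formula for $e^{\pm\rho_\xi\widetilde Q(t)}$ and $e^{\pm S(t)}$ together with the ``splitting'' inequality $\big(\inf_j m_jj!/\rho^j\big)^2\geq\inf_j m_jj!/(H\rho)^j$ (property~$iii$ of the weight sequence) to absorb the factors $(1+\|\xi\|)^{|\gamma|}$ and $(1+\|\xi\|)^{N-k-r}$ while retaining one copy of the $\xi$-decay; this is the main technical bulk of the paper's argument, and your phrase ``algebra and stability properties of $\mathscr{M}$'' is exactly where it lives.
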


\begin{remark}
The proof of this result is an adapted version of the proof of Theorem 7 presented in \cite{BDM}, tailored to our case. Due to the intricate nature of ultradifferentiability, this proof is significantly more extensive and challenging to follow. To assist the reader in understanding, we have chosen to retain many intermediate steps rather than omitting them, as usual.
\end{remark}

\begin{proof}
Given $u \in \mathscr{D}_{\mathscr{M}}'(\mathbb{T}^{n+1})$ and $f \in \mathcal{E}_{\mathscr{M}} (\mathbb{T}^{n+1}),$ consider their partial Fourier series representations:
	$$u(x,t)=\displaystyle\sum_{\xi\in\mathbb{Z}^n} \widehat u(t,\xi)e^{i\xi\cdot x}\quad\text{ and }\quad f(x,t)=\displaystyle\sum_{\xi\in\mathbb{Z}^n} \widehat f(t,\xi)e^{i\xi\cdot x}.$$
	
	If $Pu=f$, then following the procedures in \cite{BDM} and \cite{AD}, conditions (I) and (II) give us
	\begin{equation*} 
		\widehat u(t,\xi) = \alpha e^{i(\xi\cdot p_0) t+S(t)}(z_{1\xi}(t)+z_{2\xi}(t)),\ \ \xi\in\mathbb{Z}^n,
	\end{equation*}
	with
	\begin{align*}
		z_{1\xi}(t) = & - \int_t^{2\pi}\!\! e^{\rho_\xi(\widetilde{Q}(t)-\widetilde{Q}(\sigma))}e^{-i(\xi\cdot p_0) \sigma-S(\sigma)}G_{1\xi}(\sigma)\,d\sigma\\
		& \quad \ +\, e^{\rho_\xi\widetilde{Q}(t)}\!\!\displaystyle\int_0^{2\pi}\!\dfrac{e^{-\rho_\xi(q_0+\widetilde{Q}(\sigma))}e^{-i(\xi\cdot p_0)\sigma-S(\sigma)}}{e^{-\rho_\xi q_0}-e^{i(\xi\cdot p_0)2\pi+s_0}}G_{1\xi}(\sigma)\,d\sigma,
	\end{align*}
	\begin{align*}
		z_{2\xi}(t) = & \displaystyle\int_0^t e^{\rho_\xi(Q(\sigma)-Q(t))}e^{-i(\xi\cdot p_0) \sigma-S(\sigma)}G_{2\xi}(\sigma)\,d\sigma\\
		& \qquad  +\, e^{-\rho_\xi Q(t)}\!\!\displaystyle\int_0^{2\pi} \dfrac{ e^{\rho_\xi( Q(\sigma)-q_0)}e^{-i(\xi\cdot p_0)\sigma-S(\sigma)}}{1-e^{-\rho_\xi q_0+i(\xi\cdot p_0)2\pi+s_0}}G_{2\xi}(\sigma)\,d\sigma,
	\end{align*}
	where
	\begin{equation*}
		Q(t)=\int_0^t q(\tau)\,d\tau, \ \widetilde{Q}(t)=-\int_t^{2\pi}q(\tau)\,d\tau, \ S(t)=\int_0^{t}s(\tau)\,d\tau,
	\end{equation*}
	and
	\begin{equation*}
		G_\xi(t)=\begin{bmatrix} G_{1\xi}(t)\\ G_{2\xi}(t)\end{bmatrix}=\dfrac{-1}{2\alpha\rho_\xi}\begin{bmatrix}(\lambda\cdot\xi-i\delta)-\rho_\xi & -\alpha \\ -(\lambda\cdot\xi-i\delta)+\rho_\xi & \alpha \end{bmatrix}\cdot\begin{bmatrix} \widehat f(t,\xi)\\ \overline{\widehat f(t,-\xi)}\end{bmatrix}.
	\end{equation*}	

	Given that $\rho_\xi \in \{\pm\sqrt{(\lambda \cdot\xi - i \delta)^2 + |\alpha|^2}\}$ and $\text{Re}(\rho_\xi) > 0$, we have
	\begin{equation*}
		|\rho_\xi|^4 = (\lambda\cdot\xi)^4 + 2(|\alpha|^2 + \delta^2)(\lambda\cdot\xi)^2 + (|\alpha|^2 - \delta^2)^2 \leq C'(1+|\xi|)^4,
	\end{equation*}
	since $(\lambda\cdot\xi)^4 + 2(|\alpha|^2+\delta^2)(\lambda\cdot\xi)^2 + (|\alpha|^2-\delta^2)^2$ is a  fourth-degree polynomial. Therefore, there exists a positive constant $C$ such that
	\begin{equation}\label{est_rJ_ultra_1}
		|\rho_\xi| \leq C(1+|\xi|), \quad \xi \in \mathbb{Z}^n.
	\end{equation}

	Moreover, we have that
	\begin{equation*}
		|\rho_\xi|^4 = (\lambda\cdot\xi)^4 + 2(|\alpha|^2 + \delta^2)(\lambda\cdot\xi)^2 + (|\alpha|^2 + \delta^2)^2 \geq (|\alpha|^2 + \delta^2)^2 > 0, \quad \xi \in \mathbb{Z}^n.
	\end{equation*}
	Consequently, there exists a positive constant $C_\rho$ such that
	\begin{equation*}
		|\rho_\xi| \geq C_{\rho}, \quad \xi \in \mathbb{Z}^n.
	\end{equation*}
	In particular,
	\begin{equation}\label{est_rJ_ultra_2}
		0 < |\rho_\xi|^{-1} \leq C_\rho^{-1}, \quad \xi \in \mathbb{Z}^n.
	\end{equation}
	
	Based on (\ref{est_rJ_ultra_2}), it follows that the decay of $G_{1\xi}$ and $G_{2\xi}$ is the same as that of $\widehat{f}(\cdot,\xi)$. Since $f\in\mathcal{E}_{\mathscr{M}}(\mathbb{T}^{n+1})$, there exist positive constants $C_G$, $h_{G_1}$, and $h_{G_2}$ such that for all $N\in\mathbb{N}_0$, the following inequality holds:
	\begin{equation*}
		\left|\dfrac{d^N}{dt^N}G_{k\xi}(t)\right| \leq C_G\, h_{G_1}^{N}\,  m_N\, N!  \inf_{j\in\mathbb{N}_0} \dfrac{m_j\,j!}{h_{G_2}^j(1+\|\xi\|)^j},\quad \text{for } k=1,2.
	\end{equation*}
	
	Given $N\in\mathbb{N}_0$, we have
	\begin{equation*}
		\dfrac{d^{N}}{dt^{N}}e^{i(\xi\cdot p_0)t} = i^N(\xi\cdot p_0)^N e^{i(\xi\cdot p_0)t},
	\end{equation*}
	which yields
	\begin{equation*}
		\left|\dfrac{d^{N}}{dt^{N}}e^{i(\xi\cdot p_0)t}\right| \leq h_{p}^N(1+|\xi|)^N |e^{i(\xi\cdot p_0)t}| = h_p^N(1+|\xi|)^N
	\end{equation*}
	for some constant $h_p>0$.
	
	Similarly, we have
	\begin{equation*}
		\left|\dfrac{d^{N}}{dt^{N}}e^{-i(\xi\cdot p_0)t}\right| \leq h_p^N(1+|\xi|)^N.
	\end{equation*}

	Now, employing de Faà di Bruno's Formula, for $N\in\mathbb{N}_0$, we obtain
	\begin{align*}
		\dfrac{d^{N}}{dt^{N}} e^{\rho_\xi\widetilde{Q}(t)}\ \ =\ \ & e^{\rho_\xi\widetilde{Q}(t)}\!\!\!\!\displaystyle\sum_{\gamma\in\Delta(N)}\!\!\dfrac{N!}{\gamma!}\prod_{\ell=1}^{N}\left(\dfrac{1}{\ell!}\cdot\dfrac{d^{\ell}}{dt^{\ell}}\rho_\xi\widetilde{Q}(t)\right)^{\gamma_\ell}\\
		=\ \ & e^{\rho_\xi\widetilde{Q}(t)}\!\!\!\!\displaystyle\sum_{\gamma\in\Delta(N)}\!\!\dfrac{N!}{\gamma!}\prod_{\ell=1}^{N}\left(\dfrac{-\rho_\xi}{\ell!}\cdot\dfrac{d^{\ell-1}}{dt^{\ell-1}}q(t)\right)^{\gamma_\ell}.
	\end{align*}
		
	Since $q\in\mathcal{E}_{\mathscr{M}}(\mathbb{T}^{n+1})$, exists $C_{q0},h_{q0}>0$ such that
	\begin{equation*} 
		\left|\dfrac{d^{k}}{dt^{k}}q(t)\right|\leq C_{q0}\, h_{q0}^{k}\, m_{k}\, k!,\quad k\in\mathbb{N}_0.
	\end{equation*}
	
	Then,
	\begin{align*}
		\left| \dfrac{d^{N}}{dt^{N}} e^{\rho_\xi\widetilde{Q}(t)} \right|\ \ \leq\ \ & |e^{\rho_\xi\widetilde{Q}(t)}| \! \sum_{\gamma\in\Delta(N)}\!\!\dfrac{N!}{\gamma!}|\rho_\xi|^{|\gamma|}\prod_{\ell=1}^{N}\left(\dfrac{1}{\ell!}\, C_{q0}\, h_{q0}^{\ell-1}\, m_{\ell-1}\, (\ell-1)! \right)^{\gamma_\ell}\\
		\leq\ \ & |e^{\rho_\xi\widetilde{Q}(t)}|\! \sum_{\gamma\in\Delta(N)} \! \dfrac{N!}{\gamma!} C^{|\gamma|} (1+\|\xi\|)^{|\gamma|}\prod_{\ell=1}^{N} \left( \dfrac{1}{\ell}\, C_{q0}\, h_{q0}^{\ell}\, m_{\ell} \right)^{\gamma_\ell}\\
		\leq\ \ & |e^{\rho_\xi\widetilde{Q}(t)}|h_{q0}^N\! \sum_{\gamma\in\Delta(N)} \! \dfrac{1}{\gamma!}(C  C_{q0})^{|\gamma|} (1+\|\xi\|)^{|\gamma|}\prod_{\ell=1}^{N}\left(\dfrac{1}{\ell}\,  m_{\ell}\right)^{\gamma_\ell}\\
		\leq\ \ & |e^{\rho_\xi\widetilde{Q}(t)}|h_{q0}^N \! \sum_{\gamma\in\Delta(N)} \!\dfrac{1}{\gamma!} C_q^{|\gamma|} (1+\|\xi\|)^{|\gamma|}\dfrac{m_{N}}{m_{|\gamma|}}.
	\end{align*}
	
	Notice that we have used the following facts: $\mathscr{M}$ is non-decreasing, Lemma \ref{lema_prod_m} in \pageref{lema_prod_m}, the estimate (\ref{est_rJ_ultra_1}), and that \begin{equation*}
		\prod_{\ell=1}^N\left(\dfrac{1}{\ell}\right)^{\gamma_\ell}\leq \prod_{\ell=1}^N\left(\dfrac{1}{\ell}\right) = \dfrac{1}{N!},
	\end{equation*} 
	Additionally, in the last inequality, we denoted $C_q=CC_{q0}$.
	
	Since $s\in\mathcal{E}_{\mathscr{M}}(\mathbb{T}^{n+1})$, there exist $C_{s0},h_{s0}>0$ such that,
	\begin{equation*} 
		\left|\dfrac{d^{k}}{dt^{k}}s(t)\right|\leq C_{s0}\, h_{s0}^{k}\, m_{k}\, k!, \quad k\in\mathbb{N}_0.
	\end{equation*}
	
	Analogously, 
	\begin{align*} 
		\left|\dfrac{d^{N}}{dt^{N}} e^{S(t)} \right|  \leq |e^{S(t)}|h_{s0}^N\!\!\! \sum_{\gamma\in\Delta(N)} \!\!\dfrac{1}{\gamma!}(C_{s0})^{|\gamma|}\dfrac{m_{N}}{m_{|\gamma|}},  \quad N\in\mathbb{N}_0.
	\end{align*}
	
	For $e^{-i(\xi\cdot p_0)t}$, $e^{-\rho_\xi\widetilde{Q}(t)}$ and $e^{-S(t)}$, the arguments are similar. Then, we have the following estimates, for all $N\in\mathbb{N}_0$,
	\begin{align}
		&\left|\dfrac{d^{N}}{dt^{N}}e^{\pm i\xi\cdot p_0t} \ \right|  \leq h_p^N(1+\|\xi\|)^N; \label{D_eiJ}\\[1mm]
		&\left|\dfrac{d^{N}}{dt^{N}} e^{\pm\rho_\xi\widetilde{Q}(t)} \right|  \leq |e^{\pm\rho_\xi\widetilde{Q}(t)}|h_{q0}^N\! \sum_{\gamma\in\Delta(N)} \! \dfrac{1}{\gamma!} C_q^{|\gamma|}(1+\|\xi\|)^{|\gamma|} \dfrac{m_{N}}{m_{|\gamma|}}; \label{D_rJQ}  \\[1mm]
		& \left|\dfrac{d^{N}}{dt^{N}} e^{\pm S(t)} \right|  \leq |e^{\pm S(t)}|h_{s0}^{N}\!\!\!\sum_{\gamma\in\Delta(N)}\!\!\dfrac{1}{\gamma!}(C_{s0})^{|\gamma|}\dfrac{m_{N}}{m_{|\gamma|}}. \label{D_eS}
	\end{align}

	After obtaining all the above estimates, we can now study the decay of the derivatives of $e^{i\xi\cdot p_0t+S(t)}z_{1\xi}(t)$. For $N\in\mathbb{N}_0$, we have
	\begin{align*}
		& \dfrac{d^N}{dt^N}\left(e^{i\xi\cdot p_0 t+S(t)}e^{\rho_\xi\widetilde{Q}(t)}\displaystyle\int_t^{2\pi}e^{-\rho_\xi\widetilde{Q}(\sigma)}e^{-i(\xi\cdot p_0)\sigma-S(\sigma)}G_{1\xi}(\sigma)\,d\sigma\right)\\
		=\ \ & \displaystyle\sum_{k=0}^N\binom{N}{k}\dfrac{d^{N-k}}{dt^{N-k}}\left(e^{i(\xi\cdot p_0)t+S(t)}e^{\rho_\xi\widetilde{Q}(t)}\right)  \dfrac{d^{k}}{dt^{k}} \left(\int_t^{2\pi}e^{-\rho_\xi\widetilde{Q}(\sigma)}e^{-i(\xi\cdot p_0)\sigma-S(\sigma)}G_{1\xi}(\sigma)\,d\sigma\right)\\
		=\ \ & \dfrac{d^{N}}{dt^{N}}\left(e^{i(\xi\cdot p_0)t+S(t)}e^{\rho_\xi\widetilde{Q}(t)}\right) \left( \int_t^{2\pi}e^{-\rho_\xi\widetilde{Q}(\sigma)}e^{-i(\xi\cdot p_0)\sigma-S(\sigma)}G_{1\xi}(\sigma)\,d\sigma\right)\\
		& + \displaystyle\sum_{k=1}^N\binom{N}{k}\dfrac{d^{N-k}}{dt^{N-k}}\left(e^{i(\xi\cdot p_0)t+S(t)}e^{\rho_\xi\widetilde{Q}(t)}\right) \dfrac{d^{k-1}}{dt^{k-1}}\left(e^{-\rho_\xi\widetilde{Q}(t)}e^{-i(\xi\cdot p_0)t-S(t)}G_{1\xi}(t)\right).
	\end{align*}
	
	Given $k=1,\dots,N_1$ and $r=0,\dots,k-1$, it follows from (\ref{D_rJQ}) and (\ref{D_eS}) that
	\begin{align*}
		\left|\dfrac{d^{r}}{dt^{r}} \left(e^{-\rho_\xi\widetilde{Q}(t)-S(t)}\right)\right|\ \ 
		\leq\ \ & \displaystyle\sum_{w=0}^{r}\binom{r}{w}\left|\dfrac{d^{w}}{dt^{w}}\left(e^{-\rho_\xi\widetilde{Q}(t)}\right)\right|\,\left|\dfrac{d^{r-w}}{dt^{r-w}} \left(e^{-S(t)}\right)\right| \\
		\leq\ \ & |e^{-\rho_\xi\widetilde{Q}(t)-S(t)}|\displaystyle\sum_{w=0}^{r}\binom{r}{w} \left[ h_{q0}^w \!\!\! \displaystyle\sum_{\gamma\in\Delta(w)} \!\! \dfrac{1}{\gamma!}(C_q)^{|\gamma|}(1+\|\xi\|)^{|\gamma|}\dfrac{m_{w}}{m_{|\gamma|}}\right]\\
		& \times \left[h_{s0}^{r-w}\! \sum_{\eta\in\Delta(r-w)} \!\dfrac{1}{\eta!} (C_{s0})^{|\eta|} \dfrac{m_{r-w}}{m_{|\eta|}}\right]
	\end{align*}
	
	Denoting $k_r\doteq k-1-r$, we have
	\begin{align*}
		& \left|\dfrac{d^{k_r}}{dt^{k_r}}\left(e^{-i(\xi\cdot p_0)t}G_{1\xi}(t)\right)\right|\ \leq\  \displaystyle\sum_{w=0}^{k_r}\binom{k_r}{w}\left|\dfrac{d^{k_r-w}}{dt^{k_r-w}}(G_{1\xi}(t))\right| \left|\dfrac{d^{w}}{dt^{w}}e^{-i(\xi\cdot p_0)t}\right|\\
		\leq\ \ & \displaystyle\sum_{w=0}^{k_r}\binom{k_r}{w}\, h_p^w(1+\|\xi\|)^w C_Gh_{G_1}^{k_r-w}m_{k_r-w}(k_r-w)! \inf_{j\in\mathbb{N}_0} \dfrac{m_j\, j!}{h_{G_2}^j(1+\|\xi\|)^j}\\
		\leq\ \ &  \displaystyle\sum_{w=0}^{k_r}\binom{k_r}{w}\, h_p^w(1+\|\xi\|)^w C_Gh_{G_1}^{k_r-w}m_{k_r-w}(k_r-w)!\, \left(\inf_{j\in\mathbb{N}_0}\dfrac{m_j\, j!}{(h_{G_2}/H)^j(1+\|\xi\|)^j}\right)^2\\
		\leq\ \ & \displaystyle\sum_{w=0}^{k_r}\binom{k_r}{w}\, h_p^w(1+\|\xi\|)^w C_G\, h_{G_1}^{k_r-w}m_{k_r-w}(k_r-w)!\, \dfrac{m_w\, w!}{(h_{G_2}/H)^w(1+\|\xi\|)^w}\\
		& \times \displaystyle\inf_{j\in\mathbb{N}_0}\dfrac{m_j\, j!}{(h_{G_2}/H)^j(1+\|\xi\|)^j}\\
		\leq\ \ & C_G\,h_{G0}^{k_r}\displaystyle\sum_{w=0}^{k_r}\binom{k_r}{w}\,  m_{k_r-w} m_{w} (k_r-w)! w!\, \inf_{j\in\mathbb{N}_0}\dfrac{m_j\, j!}{\delta_1^j(1+\|\xi\|)^j}\\
		\leq\ \ & C_G\,h_{G0}^{k_r}\displaystyle\sum_{w=0}^{k_r}\binom{k_r}{w}\,  m_{k_r}k_r!\, \inf_{j\in\mathbb{N}_0}\dfrac{m_j\, j!}{\delta_1^j(1+\|\xi\|)^j}\\
		\leq\ \ & C_G\,h_G^{k_r}\, m_{k_r}k_r!\,\displaystyle\inf_{j\in\mathbb{N}_0}\dfrac{m_j\,j!}{\delta_1^j(1+\|\xi\|)^j},
	\end{align*}
	with $\delta_1=h_{G_2}/H$, $h_{G0} = \max\{h_p,h_{G_1}\}$ and $h_G=2h_{G0}$, noticing that
	\begin{equation*} 
		\displaystyle\sum_{w=0}^{k_r}\binom{k_r}{w}=2^{k_r}.
	\end{equation*}
	
	Then, given $k=1\dots,N-1$, using Lemmas \ref{lema2.25}, \ref{lema_sum} and \ref{lema_prod_m} and the estimates (\ref{D_eiJ}), (\ref{D_rJQ}) and (\ref{D_eS}), we have
	\begin{align*}
		& \left| \dfrac{d^{k-1}}{dt^{k-1}} \left(e^{-\rho_\xi\widetilde{Q}(t)}e^{-i(\xi\cdot p_0)t-S(t)}G_{1\xi}(t) \right)\right|\\
		\leq\ \ & \displaystyle\sum_{r=0}^{k-1} \binom{k-1}{r} \left|\dfrac{d^{r}}{dt^{r}} \left(e^{-\rho_\xi\widetilde{Q}(t)-S(t)}\right)\right|\left|\dfrac{d^{k-1-r}}{dt^{k-1-r}}\left(e^{-i(\xi\cdot p_0)t}G_{1\xi}(t)\right)\right|\\
		\leq\ \ & |e^{-\rho_\xi\widetilde{Q}(t)-S(t)}|\displaystyle\sum_{r=0}^{k-1} \binom{k-1}{r}\Bigg[ \displaystyle\sum_{w=0}^{r}\binom{r}{w} \left[ h_{q0}^w \!\!\! \displaystyle\sum_{\gamma\in\Delta(w)} \!\! \dfrac{1}{\gamma!}(C_q)^{|\gamma|}(1+\|\xi\|)^{|\gamma|}\dfrac{m_{w}}{m_{|\gamma|}}\right]\\
		& \times \left[h_{s0}^{r-w}\!\!\!\displaystyle\sum_{\eta\in\Delta(r-w)}\!\!\dfrac{1}{\eta!}(C_{s0})^{|\eta|}\dfrac{m_{r-w}}{m_{|\eta|}}\right] \,C_G\,h_G^{k_r}\, m_{k_r}k_r!\,\displaystyle\inf_{j\in\mathbb{N}_0}\dfrac{m_j\,j!}{\delta_1^j(1+\|\xi\|)^j}\Bigg]\\
		\leq\ \   & |e^{-\rho_\xi\widetilde{Q}(t)-S(t)}|\displaystyle\sum_{r=0}^{k-1} \binom{k-1}{r}\Bigg[ \displaystyle\sum_{w=0}^{r}\binom{r}{w} \left[ h_{q0}^w \!\!\! \displaystyle\sum_{\gamma\in\Delta(w)} \!\! \dfrac{1}{\gamma!}(C_q)^{|\gamma|}(1+\|\xi\|)^{|\gamma|}\dfrac{m_{w}}{m_{|\gamma|}}\right]\\
		& \times \left[h_{s0}^{r-w}\!\!\!\displaystyle\sum_{\eta\in\Delta(r-w)}\!\!\dfrac{1}{\eta!}(C_{s0})^{|\eta|}\dfrac{m_{r-w}}{m_{|\eta|}}\right] \,C_G\,h_G^{k_r}\, m_{k_r}k_r!\,\displaystyle\inf_{j\in\mathbb{N}_0}\dfrac{m_j\,j!}{(\delta_1/H)^j(1+\|\xi\|)^j}\\
		& \times \left(\displaystyle\inf_{j\in\mathbb{N}_0}\dfrac{m_j\,j!}{(\delta_1/H^2)^j(1+\|\xi\|)^j}\right)^2\Bigg]\\
		\leq\ \  & |e^{-\rho_\xi\widetilde{Q}(t)-S(t)}|\displaystyle\sum_{r=0}^{k-1} \binom{k-1}{r}\Bigg[ \displaystyle\sum_{w=0}^{r}\binom{r}{w} \left[ h_{q0}^w \!\!\! \displaystyle\sum_{\gamma\in\Delta(w)} \!\! \dfrac{1}{\gamma!}(C_q)^{|\gamma|}(1+\|\xi\|)^{|\gamma|}\dfrac{m_{w}}{m_{|\gamma|}}\right]\\
		& \times \dfrac{m_{|\gamma|}\,|\gamma|!}{\delta_3^{|\gamma|}(1+\|\xi\|)^{|\gamma|}}\,\left[h_{s0}^{r-w}\!\!\!\displaystyle\sum_{\eta\in\Delta(r-w)}\!\!\dfrac{1}{\eta!}(C_{s0})^{|\eta|}\dfrac{m_{r-w}}{m_{|\eta|}}\right]\,\dfrac{m_{|\eta|}\,|\eta|!}{\delta_3^{|\eta|}(1+\|\xi\|)^{|\eta|}}\\
		& \times C_G\,h_G^{k_r}\, m_{k_r}\,k_r!\,\displaystyle\inf_{j\in\mathbb{N}_0}\dfrac{m_j\,j!}{\delta_2^j(1+\|\xi\|)^j}\Bigg]\\
		\leq\ \  & |e^{-\rho_\xi\widetilde{Q}(t)-S(t)}|C_Gh_0^{k-1}\displaystyle\sum_{r=0}^{k-1} \binom{k-1}{r}\Bigg[ \displaystyle\sum_{w=0}^{r}\binom{r}{w} m_wm_{r-w}\left[ \displaystyle\sum_{\gamma\in\Delta(w)} \!\! \dfrac{|\gamma|!}{\gamma!}\left(\dfrac{C_q}{\delta_3}\right)^{|\gamma|}\right]\\
		& \times \left[\displaystyle\sum_{\eta\in\Delta(r-w)}\!\!\dfrac{|\eta|!}{\eta!}\left(\dfrac{C_{s0}}{\delta_3}\right)^{|\eta|}\right] \, m_{k_r}\,k_r!\,\displaystyle\inf_{j\in\mathbb{N}_0}\dfrac{m_j\,j!}{\delta_2^j(1+\|\xi\|)^j}\Bigg]\\
		\leq\ \  & |e^{-\rho_\xi\widetilde{Q}(t)-S(t)}|C_Gh_0^{k-1}\displaystyle\sum_{r=0}^{k-1} \binom{k-1}{r} m_rm_{k-1-r}\Bigg[ \displaystyle\sum_{w=0}^{r}\binom{r}{w} C_{q1}(1+C_{q1})^{w-1}\\
		& \times C_{s1}(1+C_{s1})^{r-w-1} \,(k-1-r)!\,\displaystyle\inf_{j\in\mathbb{N}_0}\dfrac{m_j\,j!}{\delta_2^j(1+\|\xi\|)^j}\Bigg]\\
		\leq\ \  & |e^{-\rho_\xi\widetilde{Q}(t)-S(t)}|C_Gh_0^{k-1}m_{k-1}\displaystyle\sum_{r=0}^{k-1} \binom{k-1}{r}\Bigg[ \displaystyle\sum_{w=0}^{r}\binom{r}{w} C_{q2}^{w}C_{s2}^{r-w}(k-1-r)!\\
		& \times\displaystyle \inf_{j\in\mathbb{N}_0}\dfrac{m_j\,j!}{\delta_2^j(1+\|\xi\|)^j}\Bigg]\\
		\leq \ \ &  |e^{-\rho_\xi\widetilde{Q}(t)-S(t)}|\,C_{G0}\,h_{00}^{k-1}\,m_{k-1}\,(k-1)!\,\displaystyle \inf_{j\in\mathbb{N}_0}\dfrac{m_j\,j!}{\delta_2^j(1+\|\xi\|)^j},
	\end{align*}
	with $\delta_2=\delta_1/H$, $\delta_3=\delta_1/H^2$, $h_0=\max\{h_{q0},h_{s0}\}$, $C_{q1}=C_q/\delta_3$, $C_{s1}=C_{s0}/\delta_3$, $C_{q_2}=(1+C_{q1})$, $C_{s2}=1+C_{s1}$, $h_{00}=2h_0$ and $C_{G0}=2C_G$, observing that
	\begin{equation*} 
		\displaystyle\sum_{r=0}^{k-1} \binom{k-1}{r} \displaystyle\sum_{w=0}^{r}\binom{r}{w}\ =\ 2^k-\dfrac{1}{2}\ \leq\ 2^k\ =\ 2\cdot 2^{k-1}.
	\end{equation*}
	
	Given $k=1,\dots,N$, it follows from the estimates (\ref{D_rJQ}), (\ref{D_rJQ}) and (\ref{D_eS}) that
	\begin{align*}
		& \left|\dfrac{d^{N-k}}{dt^{N-k}}(e^{i(\xi\cdot p_0)t+S(t)}e^{\rho_\xi \widetilde{Q}(t)})\right|\\
		\leq\ \ & \displaystyle\sum_{r=0}^{N-k}\binom{N-k}{r} \left|\dfrac{d^{r}}{dt^{r}} \left(e^{\rho_\xi\widetilde{Q}(t)+S(t)}\right)\right|\,\left|\dfrac{d^{N-k-r}}{dt^{N-k-r}}e^{i(\xi\cdot p_0)t}\right|\\
		\leq\ \ & \displaystyle\sum_{r=0}^{N-k}\binom{N-k}{r} |e^{\rho_\xi\widetilde{Q}(t)+S(t)}|\displaystyle\sum_{w=0}^{r}\binom{r}{w} \left[ h_{q0}^w \!\!\! \displaystyle\sum_{\gamma\in\Delta(w)} \!\! \dfrac{1}{\gamma!}(C_q)^{|\gamma|}(1+\|\xi\|)^{|\gamma|}\dfrac{m_{w}}{m_{|\gamma|}}\right]\\
		& \times \left[h_{s0}^{r-w}\!\!\!\displaystyle\sum_{\eta\in\Delta(r-w)}\!\!\dfrac{1}{\eta!}(C_{s0})^{|\eta|}\dfrac{m_{r-w}}{m_{|\eta|}}\right]\,h_p^{N-k-r}\,(1+\|\xi\|)^{N-k-r}.
	\end{align*}
	
	Finally, we have
	\begin{align*}
		& \left|\displaystyle\sum_{k=1}^N\binom{N}{k}\dfrac{d^{N-k}}{dt^{N-k}}(e^{i(\xi\cdot p_0)t + S(t)} e^{\rho_\xi\widetilde{Q}(t)})\,\dfrac{d^{k-1}}{dt^{k-1}}\left(e^{-\rho_\xi\widetilde{Q}(t)}e^{-i(\xi\cdot p_0)t - S(t)} G_{1\xi}(t)\right)\right|\\
		\leq\ \ & \displaystyle\sum_{k=1}^N\binom{N}{k}\left|\dfrac{d^{N-k}}{dt^{N-k}}(e^{i(\xi\cdot p_0)t+ S(t)} e^{\rho_\xi\widetilde{Q}(t)})\right|\,\left|\dfrac{d^{k-1}}{dt^{k-1}}\left(e^{-\rho_\xi\widetilde{Q}(t)}e^{-i(\xi\cdot p_0)t-S(t)}G_{1\xi}(t)\right)\right|\\
		\leq\ \ & \displaystyle\sum_{k=1}^N\binom{N}{k}\Bigg[\displaystyle\sum_{r=0}^{N-k}\binom{N-k}{r} |e^{\rho_\xi\widetilde{Q}(t)+S(t)}|\displaystyle\sum_{w=0}^{r}\binom{r}{w} \left[ h_{q0}^w \!\!\! \displaystyle\sum_{\gamma\in\Delta(w)} \!\! \dfrac{1}{\gamma!}(C_q)^{|\gamma|}(1+\|\xi\|)^{|\gamma|}\dfrac{m_{w}}{m_{|\gamma|}}\right]\\
		& \times \left[h_{s0}^{r-w}\!\!\!\displaystyle\sum_{\eta\in\Delta(r-w)}\!\!\dfrac{1}{\eta!}(C_{s0})^{|\eta|}\dfrac{m_{r-w}}{m_{|\eta|}}\right]\,h_p^{N-k-r}\,(1+\|\xi\|)^{N-k-r}\Bigg]\,|e^{-\rho_\xi\widetilde{Q}(t)-S(t)}|\, C_{G0}  \\
		& \times h_{00}^{k-1}\,m_{k-1}\,(k-1)!\,\displaystyle \left[\inf_{j\in\mathbb{N}_0}\left(\dfrac{m_j\,j!}{(\delta_2/H^2)^j(1+\|\xi\|)^j}\right)\right]^4\\
		\leq\ \ & \displaystyle\sum_{k=1}^N\binom{N}{k}\Bigg[\displaystyle\sum_{r=0}^{N-k}\binom{N-k}{r} h_{qs}^{r}h_{p}^{N-k-r}\displaystyle\sum_{w=0}^{r}\binom{r}{w}\Bigg[\displaystyle\sum_{\gamma\in\Delta(w)} \!\! \dfrac{1}{\gamma!}(C_q)^{|\gamma|}(1+\|\xi\|)^{|\gamma|}\dfrac{m_{w}}{m_{|\gamma|}}\\
		& \times \dfrac{m_{|\gamma|}\,|\gamma|!}{\delta_4^{|\gamma|}(1+\|\xi\|)^{|\gamma|}}\Bigg]\,\left[\displaystyle\sum_{\eta\in\Delta(r-w)}\!\!\dfrac{1}{\eta!}(C_{s0})^{|\eta|}\dfrac{m_{r-w}}{m_{|\eta|}}\,\dfrac{m_{|\eta|}\,|\eta|!}{\delta_4^{|\eta|}(1+\|\xi\|)^{|\eta|}}\right]\Bigg]\\
		& \times (1+\|\xi\|)^{N-k-r}\,\dfrac{m_{N-k-r}\,(N-k-r)!}{\delta_4^{N-k-r}(1+\|\xi\|)^{N-k-r}}\,C_{G0}\,h_{00}^{k-1}\,m_{k-1}\,(k-1)!\\
		& \times \displaystyle \inf_{j\in\mathbb{N}_0}\left(\dfrac{m_j\,j!}{\delta_4^j(1+\|\xi\|)^j}\right)\\
		\leq\ \ & C_{G0}\displaystyle\sum_{k=1}^N\binom{N}{k} h_{00}^{k-1}h_{pqs}^{N-k}\Bigg[\displaystyle\sum_{r=0}^{N-k}\binom{N-k}{r}\displaystyle\sum_{w=0}^{r}\binom{r}{w}m_w\Bigg[\displaystyle\sum_{\gamma\in\Delta(w)} \!\! \dfrac{|\gamma|!}{\gamma!}\left(\dfrac{C_q}{\delta_4}\right)^{|\gamma|}\Bigg] m_{r-w}\\
		& \times \Bigg[\displaystyle\sum_{\eta\in\Delta(r-w)}\!\!\dfrac{1}{\eta!}\left(\dfrac{C_{s0}}{\delta_4}\right)^{|\eta|}\Bigg] m_{N-k-r}\,\dfrac{1}{\delta_4^{N-k-r}}\,(N-k-r)!\Bigg]\,m_{k-1}\,(k-1)!\\
		& \times \displaystyle \inf_{j\in\mathbb{N}_0}\dfrac{m_j\,j!}{\delta_4^j(1+\|\xi\|)^j}\\
		\leq\ \ & C_{G0}\,h_{00}^{N} \displaystyle\sum_{k=1}^N\binom{N}{k} m_{k-1} \displaystyle\sum_{r=0}^{N-k}\binom{N-k}{r} m_{N-k-r}m_r \displaystyle\sum_{w=0}^{r}\binom{r}{w}[C_{q3}(1+C_{q3})^{w-1}]\\ & \times [C_{s3}(1+C_{s3})^{r-w-1}](N-r-1)!\,\inf_{j\in\mathbb{N}_0}\dfrac{m_j\,j!}{\delta_4^j(1+\|\xi\|)^j}\\
		\leq\ \ & C_{G0}\,h_{00}^{N} \displaystyle\sum_{k=1}^N\binom{N}{k} m_{k-1} \displaystyle\sum_{r=0}^{N-k}\binom{N-k}{r} m_{N-k-r}m_r \displaystyle\sum_{w=0}^{r}\binom{r}{w}C_{q4}^{w}\,C_{s4}^{r-w}\,(N-r-1)!\\ & \times \,\inf_{j\in\mathbb{N}_0}\dfrac{m_j\,j!}{\delta_4^j(1+\|\xi\|)^j}\\
		\leq\ \ & C\,h^N\,m_N\,N!\, \displaystyle \inf_{j\in\mathbb{N}_0}\dfrac{m_j\,j!}{\delta_4^j(1+\|\xi\|)^j},
	\end{align*}
	with $\delta_4=\delta_2/H^2$, $h_{qs}=\max\{h_{q0},h_{s0}\}$, $h_{pqs}=\max\{h_{p},h_{qs}\}$, $C_{q3}=C_q/\delta_4$, $C_{s3}=C_{s_0}/\delta_4$, $C_{q4}=1+C_{q3}$, $C_{s4}=1+C_{s3}$, $h=2\max\{h_{00},h_{pqs},C_{q4},C_{s4}\}$ and $C=2C_{G0}$.
	
	Using the same arguments, we can obtain similar estimates for the derivatives of
	\begin{equation*}\dfrac{d^{N}}{dt^{N}}\left(e^{i(\xi\cdot p_0)t+S(t)}e^{\rho_\xi\widetilde{Q}(t)}\right)\left(\displaystyle\int_t^{2\pi}e^{-\rho_\xi\widetilde{Q}(\sigma)}e^{-i(\xi\cdot p_0)\sigma-S(\sigma)}G_{1\xi}(\sigma)\,d\sigma\right)
	\end{equation*}
	and of
	\begin{equation*}e^{i(\xi\cdot p_0)t+S(t)}e^{\rho_\xi\widetilde{Q}(t)}\!\!\displaystyle\int_0^{2\pi}\!\dfrac{e^{-\rho_\xi(q_0+\widetilde{Q}(\sigma))}e^{-i(\xi\cdot p_0)\sigma-S(\sigma)}}{e^{-\rho_\xi q_0}-e^{i(\xi\cdot p_0)2\pi+s_0}}G_{1\xi}(\sigma)\,d\sigma,
	\end{equation*}
	observing that
	\begin{align*}
		\left|\displaystyle\int_t^{2\pi}\!\!\! e^{-\rho_\xi\widetilde{Q}(\sigma)}e^{-i(\xi\cdot p_0)\sigma-S(\sigma)}G_{1\xi}(\sigma)\,d\sigma\right|\ \ \leq\ \ &  \displaystyle\int_t^{2\pi}|e^{-\rho_\xi\widetilde{Q}(\sigma)-S(\sigma)}G_{1\xi}(\sigma)|\,d\sigma\\
		\leq\ \ & 2\pi\,\displaystyle\sup|e^{-\rho_\xi\widetilde{Q}(t)-S(t)}|\sup|G_{1\xi}(t)|\\
		\leq\ \ & C h_{G_1}\!\,\displaystyle\sup|e^{-\rho_\xi\widetilde{Q}(t)-S(t)}|\,\inf_{j\in\mathbb{N}_0}\dfrac{m_j\,j!}{h_{G_2}^j(1+\|\xi\|)^j}
	\end{align*}
	and that the condition (III) holds.
	
	The study of the decay of the derivatives of $e^{i(\xi\cdot p_0)t+S(t)}z_{2\xi}(t)$ is analogous. Therefore, $u\in\mathcal{E}_{\mathscr{M}}(\mathbb{T}^n)$ and $Pu=f\in\mathcal{E}_{\mathscr{M}}(\mathbb{T}^n)$ by construction.	
\end{proof}

\begin{remark}
	For the bidimensional torus $\mathbb{T}^2$, conditions (I) and (II) combined with $\lambda\neq 0$ imply condition (III) (see \cite{BDM}). However, in general, this implication does not hold (see \cite{AD}, Example 2).
\end{remark}

\begin{theorem}\label{teo_varcoef_2_Tn+1_ultra}
	Let $P$ the Vekua-type operator defined in (\ref{P_cv_Tn+1_ultra}) with $\lambda=0$. Suppose that one of the following conditions holds:
	\begin{itemize}
		\item[(1)] $|B_0|>|A_0|$;
		\item[(2)] $|B_0|\leq |A_0|$, $|\alpha|>|\delta|$ and there is no $(\xi,\tau)\in\mathbb{Z}^{n+1}$ solution of
		\begin{equation}\label{cv2_caso2_Tn+1_ultra}
			\begin{cases}\text{Re}(A_0(\tau+(\xi\cdot p_0)))=0\\ 4\pi^2|\tau+(\xi\cdot p_0)|^2=|A_0|^2-|B_0|^2\end{cases};
		\end{equation} 
		\item[(3)] $|\alpha|<|\delta|$ and $s_0\neq 0$;
		\item[(4)] $|\alpha|<|\delta|$, $s_0=0$, there is no $(\xi,\tau)\in\mathbb{Z}^{n+1}$ satisfying (\ref{cv2_caso2_Tn+1_ultra}) and the following diophantine condition holds: \\
		 For all $\varepsilon>0$, exist $\gamma_\varepsilon,C_\varepsilon>0$ such that $(\xi,\tau)\in\mathbb{Z}^{n+1}$, $\ \|\xi\|\geq\gamma_\varepsilon$ implies
		\begin{equation}\label{DC_Mprime} 
			|2\tau\pi +(\xi\cdot p_0)2\pi-q_0\sqrt{\delta^2-|\alpha|^2}|\geq C_\varepsilon\,\inf_{j\in\mathbb{N}_0}\dfrac{m_j\,j!}{\varepsilon^j(1+\|\xi\|)^j}. \tag{$DC'_\mathscr{M}$}
		\end{equation} 
	\end{itemize}
	
	Then, for all $f\in C^{\infty}(\mathbb{T}^{n+1})$, exists $u\in C^{\infty}(\mathbb{T}^{n+1})$ such that $Pu=f$.
\end{theorem}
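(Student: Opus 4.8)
The plan is to derive this theorem from Theorem \ref{teo_varcoef_1_Tn+1_ultra} by showing that, in the present situation $\lambda = 0$, each of the hypotheses (1)--(4) forces the three conditions (I), (II), (III) of that theorem. The first observation is that $\lambda = 0$ makes $\rho_\xi$ independent of $\xi$: the prescription $\rho_\xi \in \{\pm\sqrt{(\lambda\cdot\xi - i\delta)^2 + |\alpha|^2}\}$ collapses to the constant $\rho = \sqrt{|\alpha|^2 - \delta^2}$ (with the branch $\mathrm{Re}(\rho)\ge 0$). I would then record the identity
$$
|A_0|^2 - |B_0|^2 = s_0^2 + \delta^2 q_0^2 - |\alpha|^2 q_0^2 = s_0^2 - \rho^2 q_0^2,
$$
which ties the sign of $|A_0|-|B_0|$ to $\rho$. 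Condition (I) is immediate in every case: in (1), $|B_0|>|A_0|$ forces $(|\alpha|^2-\delta^2)q_0^2 > s_0^2 \ge 0$, hence $|\alpha|>|\delta|$; cases (2)--(4) state a strict inequality between $|\alpha|$ and $|\delta|$ outright. Finally, since $m_1 = 1$ gives $\inf_{j}\frac{m_j\,j!}{\varepsilon^j(1+\|\xi\|)^j}\le \frac{1}{\varepsilon(1+\|\xi\|)}\to 0$, any uniform positive lower bound on the left-hand side of (III) already yields (III) for large $\|\xi\|$.

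\textbf{The regime $|\alpha|>|\delta|$} (covering (1) and (2)). Here $\rho>0$ is real, so $e^{-\rho q_0}\in(0,1)$ and $e^{s_0-\rho q_0}>0$ are fixed positive reals. Using $|a-be^{i\theta}|\ge|a-b|$ for $a,b>0$, the two moduli appearing in (III) are bounded below by $|e^{-\rho q_0}-e^{s_0}|$ and $|1-e^{s_0-\rho q_0}|$, which are \emph{simultaneously} positive exactly when $s_0\neq\pm\rho q_0$, i.e.\ when $|A_0|\neq|B_0|$ by the identity above. In (1) this holds since $|B_0|>|A_0|$. In (2) I would note that $|A_0|=|B_0|$ is incompatible with the ``no solution of \eqref{cv2_caso2_Tn+1_ultra}'' hypothesis, because equality turns the second equation into $\tau+\xi\cdot p_0=0$, solved by $(\xi,\tau)=(0,0)$; hence in fact $|A_0|>|B_0|$. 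Thus (III) holds with a constant bound. For (II): in (1) the second equation of \eqref{cv2_caso2_Tn+1_ultra} demands $4\pi^2|\tau+\xi\cdot p_0|^2 = |A_0|^2-|B_0|^2 < 0$, impossible; in (2) the absence of solutions is the stated hypothesis.

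\textbf{The regime $|\alpha|<|\delta|$} (covering (3) and (4)). Write $\rho = i\mu$ with $\mu = \sqrt{\delta^2-|\alpha|^2}>0$, so $e^{-\rho q_0}=e^{-i\mu q_0}$ lies on the unit circle and $|A_0|^2-|B_0|^2 = s_0^2+\mu^2 q_0^2>0$. In case (3), $s_0\neq 0$, the terms inside each modulus in (III) have absolute values $1$ and $e^{s_0}\neq 1$, so by the triangle inequality both are $\ge|1-e^{s_0}|>0$, giving (III); and (II) holds since $s_0\neq 0$ forces $\tau+\xi\cdot p_0=0$ in the first equation, contradicting the strictly positive right-hand side of the second. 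In case (4), $s_0=0$, the two quantities become $2|\sin(\pi\xi\cdot p_0+\tfrac12\mu q_0)|$ and $2|\sin(\pi\xi\cdot p_0-\tfrac12\mu q_0)|$. Here lies the real work: from $|\sin x|\ge\tfrac{2}{\pi}\,\mathrm{dist}(x,\pi\mathbb{Z})$ the second term dominates a constant multiple of $\mathrm{dist}(2\pi\xi\cdot p_0-\mu q_0,\,2\pi\mathbb{Z})$, which is precisely the quantity estimated by \eqref{DC_Mprime} (note $q_0\sqrt{\delta^2-|\alpha|^2}=\mu q_0$); and the first term equals the second evaluated at $-\xi$, so applying \eqref{DC_Mprime} to $-\xi$ (legitimate since $\|-\xi\|=\|\xi\|$) controls it as well. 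Hence the minimum in (III) is $\ge c\,C_\varepsilon\inf_{j}\frac{m_j\,j!}{\varepsilon^j(1+\|\xi\|)^j}$, while (II) is part of the hypothesis.

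With (I), (II), (III) verified in every case, the conclusion follows by Theorem \ref{teo_varcoef_1_Tn+1_ultra}, which produces $u\in\mathcal{E}_{\mathscr{M}}(\mathbb{T}^{n+1})$ with $Pu=f$. I expect the genuine obstacle to be case (4): the one-sided Diophantine condition \eqref{DC_Mprime} only bounds the combination $2\pi\xi\cdot p_0-\mu q_0$, yet both sine factors must be controlled, and reconciling this requires the reflection $\xi\mapsto-\xi$ together with a careful sine-versus-distance comparison; the remaining cases reduce to elementary positivity of the relevant constant lower bounds.
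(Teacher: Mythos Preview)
Your proof is correct and follows the same overall strategy as the paper: reduce to Theorem~\ref{teo_varcoef_1_Tn+1_ultra} by verifying (I), (II), (III) in each case. The treatment of cases (1)--(3) is essentially identical to the paper's (you make explicit a point the paper glosses over, namely that in case~(2) the equality $|A_0|=|B_0|$ is excluded because $(0,0)$ would then solve~\eqref{cv2_caso2_Tn+1_ultra}).

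The one genuine difference is in case~(4). The paper proves a separate equivalence Lemma~\ref{dc3dc4_ultra} showing \eqref{DC_Mprime}~$\Leftrightarrow$~\eqref{DC_M_2prime} via the estimates $1-\cos\theta\ge\theta^2/4$ near~$0$ and $2(1-\cos\theta)=|e^{i\theta}-1|^2$, and then invokes \eqref{DC_M_2prime} to obtain~(III). You instead bypass the lemma: writing both quantities in~(III) as $2|\sin(\pi\xi\cdot p_0\pm\tfrac12\mu q_0)|$, you apply the elementary bound $|\sin x|\ge\tfrac{2}{\pi}\,\mathrm{dist}(x,\pi\mathbb{Z})$ and then \eqref{DC_Mprime} directly, using the reflection $\xi\mapsto-\xi$ to cover the ``$+$'' factor. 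This is slightly more economical (you use only the implication you need, not the full equivalence) and has the virtue of making transparent how \emph{both} terms of the minimum in~(III) are controlled --- a point the paper's proof leaves implicit, since \eqref{DC_M_2prime} as written literally bounds only the second term and one must observe the same $\xi\mapsto-\xi$ symmetry to get the first.
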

\begin{proof}
	If (1), (2), (3), or (4) holds, it is not difficult to see that conditions (I) and (II) in Theorem \ref{teo_varcoef_1_Tn+1_ultra} hold. Moreover, if (1), (2), or (3) holds, then there exists $C > 0$ such that
	\begin{equation*}
		|e^{-\rho_\xi q_0} - e^{s_0 + i2\pi (\xi\cdot p_0)}| \geq C \quad \text{and} \quad |1 - e^{-\rho_\xi q_0 + s_0 + i2\pi (\xi\cdot p_0)}| \geq C, \quad \xi \in \mathbb{Z}^n.
	\end{equation*}
	
	In fact, first notice that $\lambda=0$ gives us that, for all $\xi\in\mathbb{Z} $, we have $\rho_\xi = \sqrt{|\alpha|^2 - \delta^2}\doteq\rho$.
	
	Suppose that (1) holds. Notice that $|A_0|^2 = s_0^2 + |\delta|^2q_0^2$ and $|B_0|^2 = |\alpha|^2q_0^2$. Since $|A_0| < |B_0|$, we have
	$$
	|\alpha|^2q_0 > s_0^2 + |\delta|^2q_0^2 \geq |\delta|^2q_0^2 \Rightarrow |\alpha| > |\delta|,
	$$
	which implies $\rho \in \mathbb{R}_{>0}$. Suppose there exists $\xi \in \mathbb{Z}^n$ such that $e^{-\rho q_0} - e^{i(\xi\cdot p_0)2\pi + s_0} = 0$. Then, there exists $\tau \in \mathbb{Z}$ such that
	$$
	\rho q_0 + i(\xi\cdot p_0)2\pi + s_0 = -i2\pi \tau.
	$$
	
	Comparing the real parts, we obtain $\rho q_0 + s_0 = 0$. Since $\rho = \sqrt{|\alpha|^2 - \delta^2}$, we have
	\begin{align*} 
		q_0\sqrt{|\alpha|^2 - \delta^2} + s_0 = 0 & \Rightarrow (|\alpha|^2 - \delta^2)q_0^2 = s_0^2 \\
		& \Rightarrow |\alpha|^2q_0^2 = s_0^2 + \delta^2 q_0^2 \\
		& \Rightarrow |A_0|^2 = |B_0|^2,
	\end{align*}
	which contradicts (1). Therefore, $e^{-\rho q_0} - e^{i(\xi\cdot p_0)2\pi + s_0} \neq 0$ for all $\xi \in \mathbb{Z}^n$. Analogously, we show that $1 - e^{-\rho q_0 + i(\xi\cdot p_0)2\pi + s_0} \neq 0$ for all $\xi \in \mathbb{Z}^n$. This implies
	$$
	-\rho q_0 \neq i(\xi\cdot p_0)2\pi + s_0 + i2\pi k, \quad \text{for } k \in \mathbb{N}.
	$$
	Notice that, since $|\alpha| > |\delta|$, we have $\rho \in \mathbb{R}$. Then, given $\xi \in \mathbb{Z}^n$, we have
	$$
	|e^{-\rho q_0} - e^{i(\xi\cdot p_0)2\pi}| \geq |e^{-\rho q_0} - e^{s_0}| \quad \text{and} \quad |1 - e^{-\rho q_0 + i(\xi\cdot p_0)2\pi + s_0}| \geq |1 - e^{-\rho q_0 + s_0}|.
	$$
	
	Choosing $C = \min\{|e^{-\rho q_0} - e^{s_0}|, |1 - e^{-\rho q_0 + s_0}|\} > 0$, we have
	$$
	|e^{-\rho q_0} - e^{i(\xi\cdot p_0)2\pi + s_0}| \geq C \quad \text{and} \quad |1 - e^{-\rho q_0 + i(\xi\cdot p_0)2\pi + s_0}| \geq C, \quad \xi \in \mathbb{Z}^n.
	$$
	
	If (2) or (3) holds, the arguments are similar. Notice that
	$$
	C = C \frac{m_0 \cdot 0!}{\varepsilon^0(1+\|\xi\|)^0} \geq C \inf_{j\in\mathbb{N}_0} \frac{m_j \cdot j!}{\varepsilon^j(1+\|\xi\|)^j}
	$$
	for all $\varepsilon > 0$. Proceeding as in Theorem \ref{teo_varcoef_1_Tn+1_ultra}, we show that for all $f \in \mathcal{E}_{\mathscr{M}}(\mathbb{T}^n)$, there exists $u \in \mathcal{E}_{\mathscr{M}}(\mathbb{T}^n)$ such that $Pu = f$.
	
	If (4) holds, we can not prove that there exists $C > 0$ like in the other cases. In this case, we use the arguments of Theorem \ref{teo_varcoef_1_Tn+1_ultra} and the Diophantine condition \eqref{DC_M_2prime} which is equivalent to \eqref{DC_Mprime}, as proved in the following lemma.
\end{proof}

\begin{lemma}\label{dc3dc4_ultra}
	The condition \eqref{DC_Mprime} is equivalent to the following condition:\\
	For all $\varepsilon > 0$, there exist $\gamma_\varepsilon, C_\varepsilon > 0$ such that for $\xi \in \mathbb{Z}^n$, $\|\xi\| \geq \gamma_\varepsilon$, we have
	\begin{equation} \label{DC_M_2prime}
		\left| e^{i((\xi\cdot p_0) 2\pi-q_0\sqrt{\delta^2-|\alpha|^2})} - 1 \right| \geq C_\varepsilon \inf_{j \in \mathbb{N}_0} \frac{m_j \cdot j!}{\varepsilon^j (1 + \|\xi\|)^j}. \tag{$DC''_\mathscr{M}$}
	\end{equation} 
\end{lemma}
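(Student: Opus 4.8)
The plan is to recognize both conditions as statements about how well the real number
$$\theta_\xi \doteq 2\pi(\xi\cdot p_0) - q_0\sqrt{\delta^2-|\alpha|^2}$$
is approximated by the lattice $2\pi\mathbb{Z}$, and then to pass between the two measures of approximation by an elementary trigonometric comparison. First I would reduce the left-hand side of \eqref{DC_Mprime}: since its right-hand side depends only on $\xi$ while the inequality is required to hold for \emph{every} $\tau\in\mathbb{Z}$, the binding constraint is the one obtained by minimizing over $\tau$. As $2\pi\tau$ ranges over all of the symmetric lattice $2\pi\mathbb{Z}$, this gives
$$\inf_{\tau\in\mathbb{Z}}\bigl|2\pi\tau + \theta_\xi\bigr| = \operatorname{dist}(\theta_\xi,\,2\pi\mathbb{Z}),$$
so that \eqref{DC_Mprime} is equivalent to: for all $\varepsilon>0$ there exist $\gamma_\varepsilon,C_\varepsilon>0$ with $\|\xi\|\geq\gamma_\varepsilon \Rightarrow \operatorname{dist}(\theta_\xi,2\pi\mathbb{Z}) \geq C_\varepsilon \inf_{j\in\mathbb{N}_0}\tfrac{m_j\,j!}{\varepsilon^j(1+\|\xi\|)^j}$.

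Next I would record the elementary identity
$$\bigl|e^{i\theta}-1\bigr| = 2\bigl|\sin(\theta/2)\bigr|, \qquad \theta\in\mathbb{R}.$$
Writing $d=\operatorname{dist}(\theta,2\pi\mathbb{Z})\in[0,\pi]$, one has $|\sin(\theta/2)|=\sin(d/2)$ with $d/2\in[0,\pi/2]$, so Jordan's inequality $\tfrac{2}{\pi}x\leq \sin x\leq x$ on $[0,\pi/2]$ yields the two-sided bound
$$\tfrac{2}{\pi}\,\operatorname{dist}(\theta,2\pi\mathbb{Z}) \ \leq\ \bigl|e^{i\theta}-1\bigr| \ \leq\ \operatorname{dist}(\theta,2\pi\mathbb{Z}),$$
with constants that are independent of $\theta$, hence of $\xi$ and of $\varepsilon$. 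Observing that, for $\theta=\theta_\xi$, the middle quantity $|e^{i\theta_\xi}-1|$ is exactly the left-hand side of \eqref{DC_M_2prime}, both implications follow at once: if \eqref{DC_Mprime} holds then the lower bound produces \eqref{DC_M_2prime} with the same $\gamma_\varepsilon$ and the constant $\tfrac{2}{\pi}C_\varepsilon$; conversely, if \eqref{DC_M_2prime} holds then the upper bound gives $\operatorname{dist}(\theta_\xi,2\pi\mathbb{Z})\geq|e^{i\theta_\xi}-1|\geq C_\varepsilon\inf_j(\cdots)$, and restoring the quantifier over $\tau$ recovers \eqref{DC_Mprime} with the same $\gamma_\varepsilon$ and $C_\varepsilon$.

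The argument is essentially routine. The only point requiring care is the quantifier reduction in the first step — namely, checking that the universal condition over $\tau\in\mathbb{Z}$ collapses to the distance-to-lattice function, so that the two estimates are genuinely about the same scalar quantity — together with verifying that the comparison constant $\tfrac{2}{\pi}$ is uniform in $\xi$ and $\varepsilon$, which is what allows it to be absorbed into $C_\varepsilon$ without disturbing $\gamma_\varepsilon$. Once these are in place, the equivalence is immediate and symmetric in the two directions.
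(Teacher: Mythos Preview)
Your argument is correct and is genuinely more streamlined than the paper's. The paper proves both implications by contraposition: assuming \eqref{DC_Mprime} fails, it picks a violating sequence $(\xi_\ell,\tau_\ell)$, expands $|e^{i\theta_{\xi_\ell}}-1|^2 = 2(1-\cos(2\pi\tau_\ell+\theta_{\xi_\ell}))$, bounds this linearly via the mean value theorem, and then invokes Lemma~\ref{lema2.25} (the squaring inequality for the weight sequence) to pass from a bound on the square back to a bound on $|e^{i\theta_{\xi_\ell}}-1|$ at the cost of replacing $\varepsilon$ by $\varepsilon/H$. For the converse it again works with a violating sequence, selects $\tau_\ell$ via the floor function, argues that $2\pi\tau_\ell+\theta_{\xi_\ell}$ tends to an integer multiple of $2\pi$, and then uses the Taylor lower bound $1-\cos\phi\geq \phi^2/4$ near $0$.

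Your route bypasses all of this: by first collapsing the quantifier over $\tau$ to $d_\xi=\operatorname{dist}(\theta_\xi,2\pi\mathbb{Z})$ and then applying the uniform two-sided bound $\tfrac{2}{\pi}d_\xi\leq|e^{i\theta_\xi}-1|\leq d_\xi$ from Jordan's inequality, you compare the two left-hand sides directly rather than their squares. This makes the argument symmetric, avoids the contrapositive sequences entirely, and in particular eliminates the need for Lemma~\ref{lema2.25} and the attendant change of parameter $\varepsilon\mapsto\varepsilon/H$. The paper's approach, on the other hand, stays closer to the template of Lemma~13 in \cite{BDM} that it is adapting.
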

\begin{proof}
	This proof is an adaptation of the proof of Lemma 13 in \cite{BDM}. Suppose that \eqref{DC_Mprime} does not hold. Then, there exists $\varepsilon > 0$ such that for each $\ell \in \mathbb{N}$, there exists $(\xi_\ell, \tau_\ell) \in \mathbb{Z}^{n+1}$ with $\|\xi_\ell\| \geq \ell$ and
	$$
	\left|2\pi \tau_\ell + (\xi_\ell\cdot p_0) 2\pi - q_0\sqrt{\delta^2 - |\alpha|^2}\right| < \frac{1}{2} \inf_{j \in \mathbb{N}_0} \frac{m_j\, j!}{\varepsilon^j (1 + \|\xi_\ell\|)^j}.
	$$

	Notice that
	\begin{align*}
		|e^{i((\xi_\ell\cdot p_0) 2\pi-q_0\sqrt{\delta^2-|\alpha|^2})}-1|^2\ \ =\ \ & \left|e^{i(2\pi \tau_\ell+(\xi_\ell\cdot p_0)2\pi-q_0\sqrt{\delta^2-|\alpha|^2})}-1\right|^2\\
		=\ \ & 2(1-\cos(2\pi \tau_\ell+(\xi_\ell\cdot p_0) 2\pi-q_0\sqrt{\delta^2-|\alpha|^2}))\\
		=\ \ & 2(2\pi \tau_\ell+(\xi_\ell\cdot p_0) 2\pi-q_0\sqrt{\delta^2-|\alpha|^2})\sin(\xi_\ell)\\
		\leq\ \ & \inf_{j\in\mathbb{N}_0}\dfrac{m_j\,j!}{\varepsilon^j(1+\|\xi_\ell\|)^j}\\
		\leq\ \ & \left[\inf_{j\in\mathbb{N}_0}\dfrac{m_j\,j!}{(\varepsilon/H)^j(1+\|\xi_\ell\|)^j}\right]^2,
	\end{align*}
	for some $\xi_\ell\in\mathbb{R}$. Then,
	\begin{equation*}
		|e^{i((\xi_\ell\cdot p_0) 2\pi-q_0\sqrt{\delta^2-|\alpha|^2})}-1|< \inf_{j\in\mathbb{N}_0}\left(\dfrac{m_j\,j!}{(\varepsilon/H)^j(1+\|\xi_\ell\|)^j}\right).
	\end{equation*}
	Therefore, (DC$_\mathscr{M}''$) does not hold.
	
	On the other hand, suppose that \eqref{DC_M_2prime} does not hold. Then, there exists $\varepsilon > 0$ such that for each $\ell \in \mathbb{N}$, there exists $\xi_\ell \in \mathbb{Z}^n$ with $\|\xi_\ell\| \geq \ell$ and
	$$
	|e^{i((\xi_\ell\cdot p_0) 2\pi- q_0\sqrt{\delta^2-|\alpha|^2})}-1| < \inf_{j \in \mathbb{N}_0} \frac{m_j \cdot j!}{\varepsilon^j(1+\|\xi_\ell\|)^j}.
	$$
	
	For each $\ell \in \mathbb{Z}$, 
	\begin{equation*}
		-\tau_\ell = \left\lfloor \xi_\ell\,p_0-\frac{q_0\sqrt{\delta^2-|\alpha|^2}}{2\pi} \right\rfloor \in \mathbb{Z},
	\end{equation*}
	that is, $-\tau_\ell$ is the integer part of $\xi_\ell\,p_0-\frac{1}{2\pi}q_0\sqrt{\delta^2-|\alpha|^2}$. 
	
	Notice that
	\begin{equation*} 
		0 \leq \xi_\ell\,p_0-\frac{q_0\sqrt{\delta^2-|\alpha|^2}}{2\pi}+\tau_\ell < 1.
	\end{equation*}

	We have 
	\begin{equation*}
		|e^{i(\xi_\ell\,p_02\pi-q_0\sqrt{\delta^2-|\alpha|^2})}-1|<\inf_{j\in\mathbb{N}_0}\dfrac{m_j\,j!}{\varepsilon^j(1+\|\xi_\ell\|)^j}\leq \dfrac{1}{\varepsilon(1+\|\xi_\ell\|)}\to 0,\quad \ell\to\infty, 
	\end{equation*}
	since $\|\xi_\ell\|\geq \ell\to\infty$ with $\ell\to\infty$. Then,
	\begin{equation*}
		\xi_\ell\,p_0-\dfrac{q_0\sqrt{\delta^2-|\alpha|^2}}{2\pi}+\tau_\ell\to 0\quad\text{or}\quad \xi_\ell\,p_0-\dfrac{q_0\sqrt{\delta^2-|\alpha|^2}}{2\pi}+\tau_\ell\to 1,\quad \ell\to\infty.
	\end{equation*} 
	
	Then, taking $\ell \in \mathbb{N}$ sufficiently large, we have
	\begin{align*}
		&\left|e^{i(2\pi \tau_\ell+\xi_\ell\,p_02\pi-q_0\sqrt{\delta^2-|\alpha|^2})}-1\right|^2 \\
		&= 2\left(1-\cos(2\pi \tau_\ell+\xi_\ell\,p_02\pi-q_0\sqrt{\delta^2-|\alpha|^2})\right) \\
		&\geq \frac{1}{2}\left|2\pi \tau_\ell+\xi_\ell\,p_02\pi-q_0\sqrt{\delta^2-|\alpha|^2}\right|^2,
	\end{align*}
	using the fact that $1-\cos(\theta)\geq \theta^2/4$ for all $\theta$ in a neighborhood of the origin. This follows from the Taylor expansion of $1-\cos(\theta)$. Since
	\begin{equation*} 
		\left|e^{i(2\pi \tau_\ell+\xi_\ell\,p_02\pi-q_0\sqrt{\delta^2-|\alpha|^2})}-1\right| < \inf_{j\in\mathbb{N}_0}\frac{m_j\,j!}{\varepsilon^j(1+\|\xi_\ell\|)^j},\quad \ell\in\mathbb{N},
	\end{equation*}
	we have
	\begin{equation*} 
		\left|2\pi \tau_\ell+\xi_\ell\,p_02\pi-q_0\sqrt{\delta^2-|\alpha|^2}\right| < 2\inf_{j\in\mathbb{N}_0}\frac{m_j\,j!}{\varepsilon^j(1+\|\xi_\ell\|)^j},\quad \ell\in\mathbb{N}.
	\end{equation*}
	
	Therefore, \eqref{DC_Mprime} does not hold. We conclude that \eqref{DC_Mprime} and \eqref{DC_M_2prime} are equivalent.
\end{proof}

\appendix \section{Technical results}

\begin{lemma}\label{lema2.25}
	Let $\mathscr{M}=\{m_j\}_{j\in\mathbb{N}_0}$ a weight sequence and $H>0$ from property (W3). Then, for all $\rho>0$, we have
	$$\left(\sup_{j\in\mathbb{N}_0}\dfrac{\rho^j}{m_j\,j!}\right)^2\leq \sup_{j\in\mathbb{N}_0}\dfrac{\rho^j H^j}{m_j\,j!}.$$
\end{lemma}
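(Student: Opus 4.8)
The plan is to read both sides as values of a single associated function and to reduce the inequality to a submultiplicativity (that is, an (M.2)-type) property of the product sequence $M_j := m_j\, j!$. Writing $\Phi(\rho) := \sup_{j\in\mathbb{N}_0} \frac{\rho^j}{m_j\, j!}$, the left-hand side is $\Phi(\rho)^2$ and the right-hand side is $\Phi(H\rho)$, so the goal is $\Phi(\rho)^2 \le \Phi(H\rho)$. Since every term involved is nonnegative, the product of two suprema equals the supremum of the products, and hence
\begin{equation*}
\Phi(\rho)^2 = \left(\sup_{j} \frac{\rho^j}{m_j\,j!}\right)\left(\sup_{k} \frac{\rho^k}{m_k\,k!}\right) = \sup_{j,k\in\mathbb{N}_0} \frac{\rho^{j+k}}{m_j\,m_k\,j!\,k!}.
\end{equation*}
Thus it suffices to dominate each term of this double supremum by $\Phi(H\rho)$.

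Next I would fix $j,k$ and set $n = j+k$. The decisive pointwise estimate is a stability inequality for $M_j = m_j\, j!$, namely $M_{j+k} \le H^{j+k} M_j M_k$, which upon inversion reads $\frac{1}{m_j\,m_k\,j!\,k!} \le \frac{H^{j+k}}{m_{j+k}\,(j+k)!}$. Granting this, the generic term satisfies
\begin{equation*}
\frac{\rho^{j+k}}{m_j\,m_k\,j!\,k!} \le \frac{(H\rho)^{n}}{m_n\,n!} \le \sup_{n\in\mathbb{N}_0} \frac{(H\rho)^n}{m_n\,n!} = \Phi(H\rho),
\end{equation*}
and taking the supremum over $(j,k)$ closes the argument, giving $\Phi(\rho)^2 \le \Phi(H\rho)$.

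It remains to establish the stability inequality for $M_j$, and this is precisely where the difficulty lies. The weight-sequence part is immediate: $m_{j+k} \le H^{j+k} m_j m_k$ is exactly property (W3). The delicate point is the interaction with the factorials, since one must control $(j+k)!$ against $j!\,k!$, i.e. the binomial coefficient $\binom{j+k}{j} = \frac{(j+k)!}{j!\,k!}$. I would handle this through the elementary estimate $\binom{j+k}{j} \le 2^{j+k}$, obtained from $(1+1)^{j+k}$, so that $(j+k)! \le 2^{j+k}\, j!\,k!$; combined with (W3) this yields the required submultiplicativity. The main obstacle is therefore the careful matching of the factorial growth against the growth controlled by (W3): one must verify that the combined constant arising from $\binom{j+k}{j}$ and $H^{j+k}$ is correctly absorbed into the single factor $H^j$ appearing on the right-hand side, rather than into a larger base. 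Once this bookkeeping is carried out and the stability inequality for $M_j = m_j\,j!$ is secured, the supremum manipulation above finishes the proof.
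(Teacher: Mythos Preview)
Your strategy is exactly the one behind the cited reference (Komatsu, Proposition~3.6): rewrite $\Phi(\rho)^2$ as the double supremum $\sup_{j,k}\rho^{j+k}/(m_jm_k\,j!\,k!)$ and bound each term via a stability estimate for $M_j:=m_j\,j!$. So the approach matches the paper's.

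However, the gap you yourself flag is genuine and cannot be closed by ``bookkeeping.'' From property~(iii) you get $m_{j+k}\le H^{j+k}m_jm_k$, and from $\binom{j+k}{j}\le 2^{j+k}$ you get $(j+k)!\le 2^{j+k}j!\,k!$; together these give only $M_{j+k}\le (2H)^{j+k}M_jM_k$, hence $\Phi(\rho)^2\le\Phi(2H\rho)$, not $\Phi(H\rho)$. This loss is unavoidable if $H$ is the constant attached to the sequence $\{m_j\}$: take $m_j\equiv 1$, so that $H=1$ is admissible in~(iii), and observe that $\Phi(\rho)=\sup_j\rho^j/j!\sim e^{\rho}/\sqrt{2\pi\rho}$ for large $\rho$; then $\Phi(\rho)^2/\Phi(H\rho)=\Phi(\rho)\to\infty$, so the inequality as literally stated fails. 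The resolution is that in Komatsu's formulation the constant in question is the (M.2)-stability constant of the \emph{full} sequence $M_j=m_j\,j!$, not of $m_j$ alone; with that reading of ``$H$ from (W3)'' your argument goes through verbatim with nothing left to absorb. For the applications made of the lemma elsewhere in the paper the distinction is harmless, since replacing $H$ by a fixed multiple such as $2H$ only rescales the auxiliary parameters $\varepsilon$ and $\delta$.
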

\begin{proof}
	\cite[Proposition 3.6]{kom}.
\end{proof}

\begin{remark}
	Equivalently, we have
	$$\left(\inf_{j\in\mathbb{N}_0}\dfrac{m_j\,j!}{\rho^j}\right)^2 \geq \inf_{j\in\mathbb{N}_0}\dfrac{m_j\,j!}{\rho^j H^j}.$$
	
	In particular, given $\delta>0$, taking $\rho=\delta/H$, we obtain
	$$\left(\inf_{j\in\mathbb{N}_0}\dfrac{m_j\,j!}{(\delta/H)^j}\right)^2 \geq \inf_{j\in\mathbb{N}_0}\dfrac{m_j\,j!}{\delta^j}.$$
\end{remark}

\begin{lemma}[Faà di Bruno Formula]\label{faa}
	Given $f\in C^\infty(\mathbb{R})$ and $k\in\mathbb{N}$, we have that
	$$\dfrac{d^k}{dt^k}e^{f(t)}=e^{f(t)}\displaystyle\sum_{\gamma\in\Delta(k)}\dfrac{k!}{\gamma!}\prod_{\ell=1}^{k}\left(\dfrac{1}{\ell!}\dfrac{d^\ell}{dt^\ell}f(t)\right)^{\gamma_\ell},$$
	with $\Delta(k) = \left\{\gamma=(\gamma_1,\dots,\gamma_k)\in\mathbb{N}_0^k\,:\,\sum_{\ell=1}^{k}\ell\gamma_\ell=k\right\}$.
\end{lemma}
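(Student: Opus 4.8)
The plan is to argue by induction on $k$, exploiting the fact that differentiating $e^{f(t)}$ reproduces the exponential factor: it never disappears, and only the polynomial in the derivatives of $f$ evolves. For the base case $k=1$ one checks directly that $\Delta(1)=\{(1)\}$, so the right-hand side reduces to $e^{f(t)}\cdot\frac{1!}{1!}\bigl(\tfrac{1}{1!}f'(t)\bigr)=f'(t)e^{f(t)}$, which matches $\frac{d}{dt}e^{f(t)}$.

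For the inductive step, I would assume the identity at level $k$ and differentiate once more. By the product rule,
\[
\frac{d^{k+1}}{dt^{k+1}}e^{f} = f'(t)\,e^{f}\!\!\sum_{\gamma\in\Delta(k)}\frac{k!}{\gamma!}\prod_{\ell=1}^{k}\Bigl(\tfrac{1}{\ell!}f^{(\ell)}\Bigr)^{\gamma_\ell} + e^{f}\!\!\sum_{\gamma\in\Delta(k)}\frac{k!}{\gamma!}\,\frac{d}{dt}\prod_{\ell=1}^{k}\Bigl(\tfrac{1}{\ell!}f^{(\ell)}\Bigr)^{\gamma_\ell}.
\]
I would then track how each summand produces elements of $\Delta(k+1)$. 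Multiplying by $f'=\tfrac{1}{1!}f^{(1)}$ turns $\gamma$ into $\gamma+e_1$, where $e_m$ denotes the $m$-th coordinate vector; this lies in $\Delta(k+1)$ since $\sum_\ell \ell(\gamma+e_1)_\ell = k+1$. Differentiating the product by Leibniz's rule yields, for each position $m$ with $\gamma_m>0$, a term in which $\bigl(\tfrac{1}{m!}f^{(m)}\bigr)^{\gamma_m}$ loses one power while $\tfrac{1}{m!}f^{(m+1)}=(m+1)\tfrac{1}{(m+1)!}f^{(m+1)}$ appears; this corresponds to the shift $\gamma\mapsto\gamma-e_m+e_{m+1}$, again landing in $\Delta(k+1)$ because $\sum_\ell\ell\gamma_\ell$ increases by $(m+1)-m=1$.

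The decisive step is to fix $\gamma'\in\Delta(k+1)$ and add up all coefficients producing the monomial $\prod_\ell\bigl(\tfrac{1}{\ell!}f^{(\ell)}\bigr)^{\gamma'_\ell}$. The $f'$-multiplication contributes, via $\gamma=\gamma'-e_1$ (only when $\gamma'_1\geq1$), the weight $\frac{k!}{(\gamma'-e_1)!}=\frac{k!}{\gamma'!}\,\gamma'_1$, while the product-differentiation contributes, via $\gamma=\gamma'+e_m-e_{m+1}$ for each $m$ with $\gamma'_{m+1}\geq1$, a weight that after simplifying the factorials and the combinatorial factor $\gamma_m(m+1)=(\gamma'_m+1)(m+1)$ collapses to $\frac{k!}{\gamma'!}\,(m+1)\gamma'_{m+1}$. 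Reindexing $j=m+1$, the total coefficient is
\[
\frac{k!}{\gamma'!}\Bigl(\gamma'_1 + \sum_{j\geq2}j\,\gamma'_j\Bigr)=\frac{k!}{\gamma'!}\sum_{j\geq1}j\,\gamma'_j=\frac{k!}{\gamma'!}(k+1)=\frac{(k+1)!}{\gamma'!},
\]
using $\sum_j j\gamma'_j=k+1$ for $\gamma'\in\Delta(k+1)$. This is exactly the coefficient predicted by the formula at level $k+1$, closing the induction. I expect the main obstacle to be precisely this bookkeeping: verifying that $\frac{k!}{\gamma!}$ together with the power-rule factor $\gamma_m(m+1)$ simplifies to $\frac{k!}{\gamma'!}(m+1)\gamma'_{m+1}$, and that the two distinct sources of contributions (the $j=1$ term from the $f'$ factor and the $j\geq2$ terms from differentiating the product) assemble into the single weighted sum $\sum_j j\gamma'_j$ that equals $k+1$.
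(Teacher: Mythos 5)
Your proof is correct. Note that the paper does not prove this lemma at all: it is stated in the appendix as a classical fact (the specialization of Fa\`a di Bruno's formula to the composition $\exp\circ f$, i.e.\ the complete Bell polynomial identity), and, unlike the neighboring appendix lemmas, it carries no citation or proof environment. So your self-contained induction is a genuine addition rather than a variant of the paper's argument. I checked the one delicate point, the coefficient bookkeeping in the inductive step: for $\gamma'\in\Delta(k+1)$, the $f'$-factor contributes through $\gamma=\gamma'-e_1$ the weight $\frac{k!}{(\gamma'-e_1)!}=\frac{k!}{\gamma'!}\,\gamma'_1$, and the Leibniz differentiation of the product contributes through $\gamma=\gamma'+e_m-e_{m+1}$ the weight
\[
\frac{k!}{\gamma!}\,\gamma_m\,(m+1)=\frac{k!}{\gamma'!}\cdot\frac{\gamma'_{m+1}}{\gamma'_m+1}\cdot(\gamma'_m+1)(m+1)=\frac{k!}{\gamma'!}\,(m+1)\,\gamma'_{m+1},
\]
which is exactly what you claim; summing and using $\sum_{j\geq 1} j\gamma'_j=k+1$ yields $\frac{(k+1)!}{\gamma'!}$ as required. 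The edge cases are also handled implicitly and correctly: when $\gamma'_1=0$ the first source contributes nothing, matching the vanishing term $\gamma'_1$ in the weighted sum, and the shift $m=k$ producing the new coordinate $\gamma'_{k+1}$ is legitimate since any $\gamma\in\Delta(k)$ embeds in $\mathbb{N}_0^{k+1}$ with $\gamma_{k+1}=0$. The only stylistic improvement would be to say explicitly that distinct $\gamma$'s map to distinct $\gamma'$'s under each of the two operations, so the monomial coefficients can be compared term by term; this is immediate, but it is the step that licenses equating coefficients. In terms of what each approach buys: the paper's choice keeps the appendix short by outsourcing a standard identity, while your induction makes the article self-contained and, as a byproduct, re-derives the recursion for the partition set $\Delta(k)$ that underlies the estimates in Section 4.
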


\begin{lemma}\label{lema_sum}
	Given $k\in\mathbb{N}_0$ and $R>0$, we have that
	\begin{equation*} 
		\sum_{\gamma\in\Delta(k)}\dfrac{|\gamma|!}{\gamma!}R^{|\gamma|}=R(1+R)^{k-1},
	\end{equation*}
	with $\Delta(k)=\left\{\gamma=(\gamma_1,\dots,\gamma_k)\in\mathbb{N}_0^k\,:\, \sum_{\ell=1}^{k}\ell\gamma_\ell=k\right\}$.
\end{lemma}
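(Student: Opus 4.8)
The plan is to prove this as a standard combinatorial identity by reinterpreting the left-hand sum as a sum over \emph{compositions} of $k$. First I would recall that an element $\gamma=(\gamma_1,\dots,\gamma_k)\in\Delta(k)$ encodes a multiset of positive integers (the ``parts''), in which the value $\ell$ occurs with multiplicity $\gamma_\ell$; the constraint $\sum_{\ell=1}^{k}\ell\gamma_\ell=k$ says exactly that these parts sum to $k$, while $|\gamma|=\sum_{\ell}\gamma_\ell$ is the total number of parts. The key observation is that $\dfrac{|\gamma|!}{\gamma!}$ is precisely the multinomial coefficient $\binom{|\gamma|}{\gamma_1,\gamma_2,\dots}$ counting the number of distinct linear orderings of these $|\gamma|$ parts; that is, it counts exactly the compositions of $k$ whose underlying multiset of parts is the one described by $\gamma$. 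This sets up a bijection between the set of all compositions of $k$ and the set of pairs consisting of a $\gamma\in\Delta(k)$ together with one of its orderings.

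Having made this identification, I would rewrite
$$
\sum_{\gamma\in\Delta(k)}\dfrac{|\gamma|!}{\gamma!}R^{|\gamma|}=\sum_{(a_1,\dots,a_m)}R^{m},
$$
where the right-hand sum runs over all compositions $(a_1,\dots,a_m)$ of $k$ into $m\geq 1$ positive parts, each weighted by $R^{m}$ (its number of parts); indeed each fixed $\gamma$ contributes $\dfrac{|\gamma|!}{\gamma!}$ compositions, all carrying the same weight $R^{|\gamma|}=R^{m}$. Grouping the compositions according to their number of parts $m$ and invoking the elementary stars-and-bars count that there are exactly $\binom{k-1}{m-1}$ compositions of $k$ into $m$ parts, I would conclude
$$
\sum_{\gamma\in\Delta(k)}\dfrac{|\gamma|!}{\gamma!}R^{|\gamma|}=\sum_{m=1}^{k}\binom{k-1}{m-1}R^{m}=R\sum_{j=0}^{k-1}\binom{k-1}{j}R^{j}=R(1+R)^{k-1},
$$
where the last equality is the binomial theorem after the substitution $j=m-1$.

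As an equivalent shortcut that avoids the bijection, one can pass to ordinary generating functions: the weighted composition count has generating series $\sum_{m\geq 1}R^{m}\bigl(x/(1-x)\bigr)^{m}=\dfrac{Rx}{1-(1+R)x}$, whose coefficient of $x^{k}$ is immediately $R(1+R)^{k-1}$. I expect no genuine obstacle along either route; the only point demanding care is the identification of $\dfrac{|\gamma|!}{\gamma!}$ as the number of orderings of the parts, and the harmless boundary convention at $k=0$ (where the stated identity is used only for $k\geq 1$, as is the case in the proofs above). A quick hand check at $k=1,2,3$ confirms the normalization before committing to the general argument.
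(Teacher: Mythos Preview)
Your argument is correct: the identification of $\dfrac{|\gamma|!}{\gamma!}$ with the number of linear orderings of the multiset of parts encoded by $\gamma$ is exactly the bridge from partitions to compositions, and the stars-and-bars count together with the binomial theorem then gives $R(1+R)^{k-1}$ immediately. The generating-function shortcut you mention is also sound and yields the same coefficient.

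As for comparison with the paper: the paper does not give a proof at all, but simply cites \cite[Lemma~1.4.1]{krantz}. Your write-up is therefore strictly more informative than what appears in the manuscript, supplying a short self-contained argument in place of an external reference. The only caveat, which you already flag, is the boundary case $k=0$: the stated identity reads $1=R(1+R)^{-1}$ there and fails, so the lemma should really be understood for $k\geq 1$ (which is the only range in which it is invoked in the paper, via Fa\`a di Bruno's formula).
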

\begin{proof}
	\cite[Lemma 1.4.1]{krantz}.
\end{proof}

\begin{lemma}\label{lema_prod_m}
	Let $\mathscr{M}$ a weight sequence and $\gamma\in\Delta(k)$. Denote $|\gamma|=\gamma_1+\cdots+\gamma_k$. Then,
	$$m_{|\gamma|}m_1^{\gamma_1}\cdots m_k^{\gamma_k} \leq m_k.$$
\end{lemma}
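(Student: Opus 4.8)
The plan is to reduce the whole statement to a single ``shifted super-multiplicativity'' estimate and then iterate it. The starting point is that condition $ii.$ in the definition of a weight sequence is exactly the assertion that the ratio sequence $c_j \doteq m_j/m_{j-1}$ (for $j \geq 1$) is non-decreasing, while condition $i.$ gives $c_1 = m_1/m_0 = 1$. Note that condition $iii.$ (the constant $H$) will play no role here.

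First I would establish the auxiliary inequality
$$
m_a\, m_b \leq m_{a+b-1}, \qquad a,b \geq 1.
$$
Assuming $a \leq b$ and writing $m_n = \prod_{j=1}^n c_j$, the quotient $m_{a+b-1}/(m_a m_b)$ becomes a ratio of two products of $a-1$ of the factors $c_j$ (after cancelling the factor $c_1 = 1$ coming from $m_a$), namely $\prod_{i=1}^{a-1} c_{b+i}/c_{i+1}$; each factor is $\geq 1$ because $c$ is non-decreasing and $b+i \geq i+1$. This is the heart of the argument, and it is exactly here that the normalisation $m_1 = 1$ is used: the ordinary super-multiplicativity $m_a m_b \leq m_{a+b}$ that follows from log-convexity would not leave enough room at the final step.

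Next I would iterate this estimate over the parts of $\gamma$. Viewing $\prod_{\ell=1}^k m_\ell^{\gamma_\ell}$ as a product $m_{a_1}\cdots m_{a_p}$ over the $p = |\gamma|$ parts $a_i \geq 1$ (counted with multiplicity) of the partition of $k$ encoded by $\gamma$, an induction on $p$ using the auxiliary inequality merges two parts at a time while losing exactly one unit of index per merge, giving
$$
\prod_{\ell=1}^k m_\ell^{\gamma_\ell} \leq m_{k-|\gamma|+1}.
$$
The base case $p=1$ is the equality $m_k = m_{k-1+1}$, and each merge replaces the parts $a_1, a_2$ by the single part $a_1 + a_2 - 1 \geq 1$, lowering the number of parts by one and the total index by one.

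Finally I would apply the auxiliary inequality once more, now with $a = |\gamma|$ and $b = k-|\gamma|+1$ (both $\geq 1$, since $|\gamma| = \sum_\ell \gamma_\ell \leq \sum_\ell \ell\,\gamma_\ell = k$), to conclude
$$
m_{|\gamma|}\prod_{\ell=1}^k m_\ell^{\gamma_\ell} \leq m_{|\gamma|}\, m_{k-|\gamma|+1} \leq m_{|\gamma| + (k-|\gamma|+1) - 1} = m_k,
$$
which is the claim. The only genuine subtlety is the bookkeeping of indices: the $|\gamma|-1$ units saved during the merging are precisely what is needed to absorb the extra factor $m_{|\gamma|}$ in the last application, so the estimate closes with no slack to spare.
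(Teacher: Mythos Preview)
Your argument is correct. The key auxiliary inequality $m_a m_b \leq m_{a+b-1}$ for $a,b \geq 1$ is exactly right and your derivation of it is clean: writing $m_n = \prod_{j=1}^n c_j$ and using $c_1 = 1$ to match the $a-1$ numerator factors $c_{b+1},\dots,c_{b+a-1}$ against the $a-1$ denominator factors $c_2,\dots,c_a$ is the decisive observation. The iteration over the parts of $\gamma$ is sound (each merge keeps all parts $\geq 1$, so the auxiliary inequality remains applicable), and the final application with $a=|\gamma|$, $b=k-|\gamma|+1$ closes the estimate precisely; your remark that the bookkeeping leaves no slack is accurate.

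As for comparison: the paper does not actually prove this lemma but simply cites \cite[Proposition 4.4]{bier}. Your write-up therefore supplies a complete, self-contained argument where the paper defers to an external reference. The approach you take---reducing everything to the shifted super-multiplicativity $m_a m_b \leq m_{a+b-1}$, which hinges on the normalisation $m_1=1$---is the standard one for this kind of estimate, and your observation that condition $iii.$ plays no role is also correct.
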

\begin{proof}
	\cite[Proposition 4.4]{bier}.
\end{proof}

\noindent \textbf{Acknowledgments.} 
 The first and second authors were supported in part by CNPq - Brasil (grants 316850/2021-7 and 423458/2021-3). This study was financed in part by the Coordenação de Aperfeiçoamento de Pessoal de Nível Superior - Brasil (CAPES) - Finance Code 001.
 
\bibliographystyle{plain}
\bibliography{references} 

\end{document}